\newtheorem{lemma}{Lemma}
\newtheorem{propozycja}{Proposition}
\newtheorem{proposition}{Proposition A\hskip-1pt}
\newtheorem{corollary}{Corollary}
\newtheorem{Theorem}{Theorem}
\newtheorem{uwaga}{Remark}
\newtheorem{definition}{Definition}
\newcommand{\C}{\Bbb C}
\def\Re{\operatorname{Re}}
\title{Gromov (non-)hyperbolicity of certain domains in $\mathbb{C}^{n}$}
\author{Nikolai Nikolov}
\author{Pascal J.~Thomas}
\author{Maria Trybu\l{}a}
\address{Institute of Mathematics and Informatics\\Bulgarian Academy
of Sciences\\ Acad. G. Bonchev 8, 1113 Sofia, Bulgaria\newline
\indent Faculty of Information Sciences\\
State University of Library Studies and Information Technologies\\
Shipchenski prohod 69A, 1574 Sofia,
Bulgaria}\email{nik@math.bas.bg}
\address{Universit\'e de Toulouse\\ UPS, INSA, UT1, UTM \\
Institut de Math\'e\-ma\-tiques de Toulouse\\
F-31062 Toulouse, France} \email{pascal.thomas@math.univ-toulouse.fr}
\address{Institute of Mathematics, Faculty of Mathematics and Computer Science
\\ Jagiellonian University, \L ojasiewicza 6, 30-348 Krak\'ow, Po\-land}\email{maria.trybula@im.uj.edu.pl}
\subjclass[2010]{32F17, 32F45.}
\thanks{Research of the third author is supported by the International PhD
programme ``Geometry and Topology in Physical Models'' of the Foundation
for Polish Science, by the Polish National Science Center -- grant
PRO-2013/11/N/ST1/03609, and by the Bulgarian National Science Found -- contract DFNI-I 02/14.
The initial version of this paper was prepared during her visit to the Institute
of Mathematics and Informatics, Bulgarian Academy of Science, October 2013 -- April 2014.\\
\indent The authors would like to thank the referee for his/her valuable comments.}
\begin{document}

\begin{abstract}
We prove the Gromov non-hyperbolicity with respect to the Kobayashi distance for $\mathcal{C}^{1,1}$-smooth convex domains
in $\mathbb{C}^{2}$ which contain an analytic disc in the boundary or have a point of infinite type with rotation symmetry.
The same is shown for ``generic'' product spaces, as well as for the symmetrized polydisc and the tetrablock.
On the other hand, examples of smooth, non-pseudoconvex, Gromov hyperbolic domains in $\Bbb C^n$ are given.
\end{abstract}

\maketitle

\section{Introduction and statements}
In \cite{Gromov}, Gromov introduced the notion of almost hyperbolic space. He discovered that ``negatively curved'' space equipped with some distance share many properties with the prototype, even though the distance does not come from a Riemannian metric. This gave the impulse to intensive research to find new interesting classes of spaces which are hyperbolic in that sense. In this paper we are mainly interested in investigating this concept with respect to the Kobayashi distance of convex domains. One may suspect that it is a restriction to consider only the Kobayashi metric.
Actually, because the Kobayashi distance of a ($\Bbb C$-)convex domain containing no complex lines, as well as of a bounded strictly pseudoconvex domain, is bilipschitz equivalent to the (inner) Carath\'{e}odory and Bergman distances (see \cite[Theorem 12]{NPZ} and \cite[Proposition 4]{Nikolov}), it does not matter which one we choose (see below). Recall that a set $E$ in $\Bbb C^n$ is called \emph{$\Bbb C$-convex} if any intersection of $E$ with a complex line $l$ and its complement in $l$ are both connected in $l$ (cf. \cite{APS}).

The notion of a bilipschitz equivalence has the following generalization.

\begin{definition}
Let $(X_1,d_1)$ and $(X_2,d_2)$ be two metric spaces.
Then a map $\varphi:X_1\to X_2$ is said to be a \emph{quasi-isometry} if there are constants $c_1,c_2 >0$ such
that for any $x,y\in X_1$,
$$
c_1^{-1} d_1(x,y) - c_2 \le d_2\left( \varphi(x), \varphi(y) \right) \le c_1 d_1(x,y) + c_2.
$$
Two distances $d_1,d_2$ on a set $X$ are said to be quasi-isometrically equivalent
if the identity map is a quasi-isometry from $(X,d_1)$ to $(X,d_2)$.
\end{definition}

Gromov hyperbolicity is well-known to be invariant under bijective quasi-isometries of path metric spaces
(cf. \cite[Theorems 3.18, 3.20]{Jesus}).

\begin{definition}
\label{defghyp}
Let $(D,d)$ be a metric space. Given points $x,y,z\in D,$ the \emph{Gromov product} is
$$(x,y)_{z}=d(x,z)+d(z,y)-d(x,y).
$$
Let
$$
S_d(p,q,x,w)= \min\{(p,x)_{w},(x,q)_{w}\}-(p,q)_{w}.
$$
$(D,d)$ is \emph{Gromov hyperbolic} if
$$\sup_{p,q,x,w\in D} S_d(p,q,x,w) <\infty.
$$
If $S_d(p,q,x,w)\le 2\delta$, then $(D,d)$ is called \emph{$\delta$-hyperbolic}.
\end{definition}

We refer to \cite{Jesus} for other characterizations of Gromov hyperbolicity, especially
for path metric spaces. We chose this one because it does not use geodesics explicitly.

\begin{definition}
$(D,d)$ is a \emph{path metric space} if, for any two points
$x,y\in D$ and any number $\varepsilon >0,$ there exists a
rectifiable path joining $x$ and $y$ with length at most $d(x,y)+\varepsilon.$
Then the distance $d$ is called \emph{intrinsic}.
\end{definition}

From now on, let $D$ be a domain in $\mathbb{C}^{n}$.

Denote by $c_D$ and $l_{D}$ the Carath\'{e}odory distance and the Lempert function of $D$:
$$c_{D}(z,w)=\sup\{\tanh^{-1}|f(w)|:f\in\mathcal{O}(D,\Bbb D), f(z)=0\},$$
$$l_{D}(z,w)=\inf\{\tanh^{-1}|\alpha|:\exists\varphi\in\mathcal{O}(\mathbb{D},D)
\hbox{ with }\varphi(0)=z,\varphi(\alpha)=w\},$$
where $\mathbb{D}$ is the unit disc. The Kobayashi distance $k_{D}$ is the
largest pseudodistance not exceeding $l_{D}.$ The inner Carath\'{e}odory distance $c_D^i$ is
the inner pseudodistance associated to $c_D.$ So, $c_D\le c_D^i\le k_D\le l_D.$
By Lempert's seminal paper \cite{Lempert}, we have equalities above if $D$ is convex (or
bounded, $\mathcal{C}^2$-smooth and $\Bbb C$-convex).

An important  property of $k_{D}$ is that it is the integrated form of the Kobayashi metric $\kappa_{D}$
of $D,$ i.e. \begin{multline*}
k_{D}(z,w)=\inf\{\int_0^1\kappa_{D}(\gamma (t);\gamma^{\prime}(t))dt:\\
\gamma:[0,1]\to D\textup{ is a smooth curve with }\gamma(0)=z\textup{ and }\gamma(1)=w\},
\end{multline*}
where
$$\kappa_{D}(z;X)=\inf\{|\alpha|:\exists\varphi\in\mathcal{O}(\mathbb{D},D)
\hbox{ with }\varphi(0)=z,\ \alpha\varphi'(0)=X\},$$
$z,w\in D,\ X\in\mathbb{C}^{n}.$

We refer to \cite{Jarnicki} for basic properties of the invariants defined here and
of the Bergman distance $b_D.$

We shall say that $D$ is Gromov \emph{$s$-hyperbolic} if $(D,s_D)$ is Gromov hyperbolic
with respect to the distance $s$
(this should not be confused with $\delta$-hyperbolicity for some
constant $\delta>0$).

The first result concerning Gromov $k$-hyperbolicity for domains in $\C^n$
was given by Ba\-logh and Bonk \cite{Bonk} who gave both positive and negative examples.
They proved that any bounded strictly pseudoconvex domain is Gromov $k$-hyperbolic \cite[Theorem 1.4]{Bonk}.
They also showed that the Cartesian product of bounded strictly pseudoconvex domains is not Gromov $k$-hyperbolic
\cite[Proposition 5.6]{Bonk} which is a special case of a general situation mentioned in many places, but without proof (cf. \cite{Gaussier}).

\begin{propozycja}\label{I}
Assume that $(X_1,d_1)$ is a path metric space with $d_1$
unbounded and $(X_2,d_2)$ a metric space with unbounded $d_2$. Let
$d=\max\{d_1,d_2\}$. Then $(X_1\times X_2,d)$ is
not Gromov hyperbolic.
\end{propozycja}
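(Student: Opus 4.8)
The plan is to violate the Gromov hyperbolicity condition by exhibiting four points $p,q,x,w$ for which $S_d(p,q,x,w)$ is arbitrarily large. The natural choice is to place $p$ and $q$ "far out" in the first factor but on opposite sides of a long geodesic, $x$ "far out" in the second factor, and $w$ somewhere in the middle; the max-metric $d=\max\{d_1,d_2\}$ then decouples the three Gromov products in a way that forces $S_d$ to blow up.

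\medskip

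Concretely, fix a basepoint $w=(a_1,a_2)\in X_1\times X_2$. For a large parameter $R>0$, use that $d_1$ is unbounded to pick $b_1,b_1'\in X_1$ with $d_1(a_1,b_1)$ and $d_1(a_1,b_1')$ both large; since $(X_1,d_1)$ is a path metric space, one can in fact choose them on an (almost-)geodesic segment through $a_1$, so that $d_1(b_1,b_1')\approx d_1(a_1,b_1)+d_1(a_1,b_1')$, i.e. the Gromov product $(b_1,b_1')_{a_1}$ in $X_1$ is small (say $\le\varepsilon$). Using that $d_2$ is unbounded, pick $b_2\in X_2$ with $d_2(a_2,b_2)$ large. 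Now set
\[
p=(b_1,a_2),\qquad q=(b_1',a_2),\qquad x=(a_1,b_2),\qquad w=(a_1,a_2),
\]
and arrange the three "large" distances $d_1(a_1,b_1)\approx d_1(a_1,b_1')\approx d_2(a_2,b_2)\approx R$ to all be comparable to $R$, with $R$ much bigger than $\varepsilon$ and than any bounded error terms.

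\medskip

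The computation is then essentially bookkeeping with the max-metric. Since the second coordinates of $p,q,w$ all equal $a_2$ and $d_1(a_1,b_1)$ is large, $d(p,w)=d_1(a_1,b_1)$; likewise $d(q,w)=d_1(a_1,b_1')$ and, since the first coordinates of $x$ and $w$ agree, $d(x,w)=d_2(a_2,b_2)$. For the cross distances: $d(p,x)=\max\{d_1(a_1,b_1),d_2(a_2,b_2)\}\approx R$, similarly $d(q,x)\approx R$, and $d(p,q)=\max\{d_1(b_1,b_1'),0\}=d_1(b_1,b_1')$, which is close to $d_1(a_1,b_1)+d_1(a_1,b_1')\approx 2R$. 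Plugging in, $(p,x)_w=d(p,w)+d(w,x)-d(p,x)\approx R+R-R=R$, and similarly $(x,q)_w\approx R$, while $(p,q)_w=d(p,w)+d(w,q)-d(p,q)\approx R+R-2R=0$ (up to the small error $\varepsilon$). Hence $S_d(p,q,x,w)=\min\{(p,x)_w,(x,q)_w\}-(p,q)_w\gtrsim R-\varepsilon\to\infty$ as $R\to\infty$, so $\sup S_d=\infty$ and $(X_1\times X_2,d)$ is not Gromov hyperbolic.

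\medskip

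The only genuinely delicate point is controlling $d(p,q)=d_1(b_1,b_1')$ from below: one needs the two chosen points in $X_1$ to be "antipodal" about $a_1$, which is exactly what the path-metric hypothesis on $X_1$ buys us — travel a distance $R$ along an almost-geodesic from $a_1$, then travel $R$ along another almost-geodesic in a "different direction," making sure the concatenation is still almost length-minimizing. (If $X_1$ were not assumed path-connected in this strong sense, $d_1$ unbounded alone would not prevent all large distances from $a_1$ from being realized by points clustered near a single geodesic ray, and the argument would fail.) The rest is routine: keep track of the additive constants coming from $\varepsilon$ and from replacing "$\approx$" by honest inequalities, and note that no bounded $c_2$ in the definition of quasi-isometry, nor any finite $\delta$, can absorb a quantity tending to infinity.
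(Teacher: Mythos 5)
Your construction is essentially the paper's: a near-geodesic segment of length about $2R$ in $X_1$ with basepoint at its approximate midpoint, the two endpoints as $p$ and $q$, and a fourth point obtained by moving far away in the $X_2$-factor, followed by the same bookkeeping with the max-metric showing $(p,q)_w\approx 0$ while $(p,x)_w$ and $(x,q)_w$ are both about $R$. The one step that does not work as written is the very point you flag as delicate: you fix $w=(a_1,a_2)$ first and then ask for $b_1,b_1'$ ``antipodal about $a_1$'' by concatenating two almost-geodesics leaving $a_1$ in different directions --- in a general path metric space there is no way to guarantee such a concatenation is almost length-minimizing (and for $X_1=[0,\infty)$ with $a_1=0$ no antipodal pair exists at all). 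The fix is simply to reverse the order, as the paper does: use unboundedness of $d_1$ to get two points at distance $\ge 2R$, join them by a path of length $<d_1+1$, truncate it at the first time it reaches distance $2R$ from its origin, and take the approximate midpoint of that segment as the first coordinate of the basepoint; since non-hyperbolicity only requires some bad quadruple, the basepoint is yours to choose. With that reordering your estimates all go through and the argument coincides with the paper's.
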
\label{I}

The next proposition is more general than the previous one. However its proof uses Proposition \ref{I}.

\begin{propozycja}\label{II} Let $(X_1,d_1)$ and $(X_2,d_2)$ be metric spaces,
such that one of them is a path metric space. Let $d=\max\{d_1,d_2\}$.
Then $(X_1\times X_2,d)$ is Gromov hyperbolic if and only if one of the factors is Gromov hyperbolic and the metric
of the second one is bounded (in particular, it is also Gromov hyperbolic).
\end{propozycja}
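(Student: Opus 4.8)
The plan is to prove the two implications separately. For the \emph{necessity} (``only if'') the idea is to combine Proposition \ref{I} with the fact that, for the chosen notion of Gromov hyperbolicity, finiteness of $\sup S_d$ passes trivially to metric subspaces; for the \emph{sufficiency} (``if'') the idea is a direct additive comparison of Gromov products. The one recurring observation is elementary: if one factor has bounded metric, say $d_2\le M<\infty$, then for all $a,b\in X_1\times X_2$,
\[
d_1(\pi_1 a,\pi_1 b)\ \le\ d(a,b)\ \le\ d_1(\pi_1 a,\pi_1 b)+M,
\]
where $\pi_1$ denotes the projection onto $X_1$; in particular the restriction of $d$ to any slice $X_1\times\{c\}$ is an isometric copy of $(X_1,d_1)$.

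For the necessity, assume $(X_1\times X_2,d)$ is Gromov hyperbolic. First I would show that $d_1$ and $d_2$ cannot both be unbounded. Indeed, one of the factors is a path metric space; if the other metric were also unbounded, Proposition \ref{I} --- applied after the obvious isometry $(x_1,x_2)\mapsto(x_2,x_1)$, if needed, so that the path metric factor occupies the first slot --- would force $(X_1\times X_2,d)$ to be non-Gromov-hyperbolic, a contradiction. Hence one factor, say $X_2$, has bounded metric; then the slice $X_1\times\{c\}$ with the restricted metric is isometric to $(X_1,d_1)$, and since $\sup S_d$ over a subset never exceeds $\sup S_d$ over the whole space, $(X_1,d_1)$ is Gromov hyperbolic. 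This proves the necessity, the ``in particular'' clause being the remark that a bounded metric space is automatically Gromov hyperbolic (all its Gromov products are bounded).

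For the sufficiency, assume --- again after the swap isometry if necessary --- that $(X_1,d_1)$ is Gromov hyperbolic and $d_2\le M$. Expanding the three $d$-distances in the Gromov product $(x,y)^d_w$ (the superscript recording the metric used) and invoking $0\le d(a,b)-d_1(\pi_1 a,\pi_1 b)\le M$ gives
\[
\bigl|(x,y)^{d}_{w}-(\pi_1 x,\pi_1 y)^{d_1}_{\pi_1 w}\bigr|\le 2M
\]
for all $x,y,w\in X_1\times X_2$. Feeding this into the definition of $S_d$ and using that $\min$ is monotone in each argument yields $S_d(p,q,x,w)\le S_{d_1}(\pi_1 p,\pi_1 q,\pi_1 x,\pi_1 w)+3M$ for every quadruple; taking the supremum shows $(X_1\times X_2,d)$ is Gromov hyperbolic.

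I do not expect a serious obstacle: the only substantive ingredient is Proposition \ref{I}, which already isolates the sole obstructive configuration (both factors ``large''), and the remainder is routine. The point needing a little care is checking that the $\min$ in the definition of $S_d$ does not degrade the additive comparison of Gromov products --- it does not, because $\min$ is monotone in each argument --- and keeping track of the (harmless) additive constants. Note also that the path metric hypothesis enters only through the citation of Proposition \ref{I}; neither the sufficiency argument nor the subspace step in the necessity argument uses it.
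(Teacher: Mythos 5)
Your proof is correct and follows essentially the same route as the paper: Proposition \ref{I} rules out both factors being unbounded, and an additive comparison of Gromov products (using $d_1\le d\le d_1+M$) transfers hyperbolicity between the product and the unbounded factor. The only cosmetic difference is that for the necessity the paper reuses that same additive comparison where you instead restrict to a slice $X_1\times\{c\}$; both are one-line observations under the four-point definition of hyperbolicity used here.
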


Moreover, the proof of Proposition \ref{I} and Remark 1 (following this proof)
show that the path property in
Proposition \ref{II} can be replaced the following.

\begin{definition}
A metric space $(Y,d)$ admits the \emph{weak midpoints property} if either $d$ is
bounded or there exist sequences $(x_k),(y_k),(z_k)\subset Y$ such that
$d(x_k,y_k)\to\infty$ and
\begin{equation}
\label{wmp}
\frac{d(x_k,z_k)}{d(x_k,y_k)}\to\frac{1}{2},\ \frac{d(y_k,z_k)}{d(x_k,y_k)}\to\frac{1}{2}.
\end{equation}
\end{definition}

\begin{corollary}\label{III} Let $D_1$ and $D_2$ be Kobayashi hyperbolic domains (i.e. $k_{D_1}$
and $k_{D_2}$ are distances) admitting non-constant bounded holomorphic functions
(for example, bounded domains). Then $D_1\times D_2$ is not Gromov $k$-hyperbolic.
\end{corollary}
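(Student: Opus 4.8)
The plan is to deduce Corollary~\ref{III} directly from Proposition~\ref{I} by way of the product formula for the Kobayashi distance. First I would recall the standard fact (see e.g. \cite{Jarnicki}) that $k_{D_1\times D_2}=\max\{k_{D_1},k_{D_2}\}$, so that, setting $X_j:=D_j$ and $d_j:=k_{D_j}$, the metric space $(D_1\times D_2,k_{D_1\times D_2})$ is exactly the product $(X_1\times X_2,\max\{d_1,d_2\})$ appearing in Proposition~\ref{I}. Next I would check the remaining hypotheses of that proposition: $k_{D_j}$ is a genuine distance since $D_j$ is Kobayashi hyperbolic, and $(D_j,k_{D_j})$ is a path metric space because $k_{D_j}$ is the integrated form of the Kobayashi metric $\kappa_{D_j}$, hence intrinsic.

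The only point needing a real argument is that $k_{D_1}$ and $k_{D_2}$ are unbounded; this is where the assumption that each $D_j$ carries a non-constant bounded holomorphic function is used, and it is the (mild) crux of the proof, since e.g. the Carath\'eodory distance of such a domain need not be unbounded. Fix $j$ and a non-constant holomorphic $g\colon D_j\to\D$ (rescale if necessary), and push forward to the bounded planar domain $\Omega:=g(D_j)\subset\D$, which is not a single point because $g$ is non-constant. Let $\overline{B(c,R)}$ be a closed disc of least radius containing $\Omega$; by minimality some $\zeta\in\partial B(c,R)$ lies in $\overline\Omega$, so one may pick $w_k\in\Omega$ with $w_k\to\zeta$. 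Since $\Omega\subset B(c,R)$, monotonicity of the Kobayashi distance together with completeness of the Poincar\'e disc give, for a fixed $z_0\in\Omega$, that $k_\Omega(z_0,w_k)\ge k_{B(c,R)}(z_0,w_k)\to\infty$. Writing $z_0=g(a)$, $w_k=g(b_k)$ with $a,b_k\in D_j$ and using that $g$ is distance-decreasing, one gets $k_{D_j}(a,b_k)\ge k_\Omega(z_0,w_k)\to\infty$, so $k_{D_j}$ is unbounded.

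With all hypotheses in place, Proposition~\ref{I} shows that $(D_1\times D_2,k_{D_1\times D_2})$ is not Gromov hyperbolic, i.e. $D_1\times D_2$ is not Gromov $k$-hyperbolic; alternatively, since neither $k_{D_j}$ is bounded, Proposition~\ref{II} yields the same conclusion. For the parenthetical special case of bounded domains nothing extra is needed: a coordinate function already furnishes a non-constant bounded holomorphic function (and, even more directly, $k_{D_j}$ is unbounded because $D_j$ is contained in a ball).
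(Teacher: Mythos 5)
Your proposal is correct and follows essentially the same route as the paper: reduce to Proposition \ref{I} via the product formula $k_{D_1\times D_2}=\max\{k_{D_1},k_{D_2}\}$ and the intrinsic (path metric) nature of $k$, with the only substantive point being the unboundedness of each $k_{D_j}$. The paper gets that unboundedness in one line, taking $z_j$ with $|f(z_j)|\to\sup_G|f|$ and noting $k_G(z,z_j)\ge c_G(z,z_j)\to\infty$; your minimal-enclosing-disc argument on the image domain $g(D_j)$ is a valid, essentially equivalent way of locating such a ``peak'' sequence.
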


To see this, it is enough to observe that if a domain $G$ in $\Bbb C^n$ admits a non-constant bounded
holomorphic function $f$ and $|f(z_j)|\to\sup_G|f|,$ then $k_G(z,z_j)\ge c_G(z,z_j)\to\infty.$
\smallskip

Note also that Proposition \ref{I} implies that if $D_1$ and $D_2$ are planar domains with complements containing more than one point
(i.e. they are Kobayashi hyperbolic), then
$D_1\times D_2$ is not Gromov $k$-hyperbolic (use that $k_{D_k}(z,z_j)\to\infty$ as $z_j\to\partial
D_k,$ $k=1,2$).\smallskip

As an immediate consequence we obtain that the polydisc is not Gromov $k$-hyperbolic. Moreover, even its ``symmetrized'' counterpart is not.

\begin{propozycja}\label{G_n}
$\mathbb{G}_{n}$ is not Gromov $c$- nor $k$-hyperbolic for $n\geq 2$.
\end{propozycja}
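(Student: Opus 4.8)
The plan is to transplant the non-hyperbolicity of the polydisc to $\mathbb{G}_n$ through the proper symmetrization map $\pi\colon\mathbb{D}^n\to\mathbb{G}_n$, $\pi(\lambda)=(\sigma_1(\lambda),\dots,\sigma_n(\lambda))$, working near a point of the Shilov boundary with pairwise distinct coordinates, where $\pi$ is a local biholomorphism. Fix $\omega_1,\dots,\omega_n\in\partial\mathbb{D}$ pairwise distinct, put $\zeta^0=(\omega_1,\dots,\omega_n)$ and $\zeta=\pi(\zeta^0)$. Since the Jacobian of $\pi$ at $\lambda$ is, up to sign, the Vandermonde product $\prod_{i<j}(\lambda_i-\lambda_j)$, it is nonzero at $\zeta^0$, so $\pi$ maps a neighbourhood $U$ of $\zeta^0$ biholomorphically onto a neighbourhood $V=\pi(U)$ of $\zeta$; and because $\pi^{-1}(\mathbb{G}_n)=\mathbb{D}^n$ (the multiset of roots of the associated polynomial is $\{\lambda_1,\dots,\lambda_n\}$), one has $\pi(U\cap\mathbb{D}^n)=V\cap\mathbb{G}_n$, whence $k_{V\cap\mathbb{G}_n}(\pi z,\pi w)=k_{U\cap\mathbb{D}^n}(z,w)$ for $z,w\in U\cap\mathbb{D}^n$.

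Next I would use a standard localization of the Kobayashi distance at $\zeta$. Near $\zeta$ the domain $\mathbb{G}_n$ is biholomorphic, via $(\pi|_U)^{-1}$, to a ``corner'' of $\mathbb{D}^n$, hence it carries a local holomorphic peak function there (pull back $z\mapsto\frac1n\sum_j\overline{\omega_j}z_j$). Since $\mathbb{G}_n$ is bounded, this gives a neighbourhood $V'\subset\subset V$ of $\zeta$ and a constant $C>0$ with $k_{\mathbb{G}_n}(a,b)\ge k_{\mathbb{G}_n\cap V}(a,b)-C$ for $a,b\in\mathbb{G}_n\cap V'$ (cf.\ \cite{Jarnicki}). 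Combining this with the distance-decreasing inequality $k_{\mathbb{G}_n}(\pi z,\pi w)\le k_{\mathbb{D}^n}(z,w)$ and the identity above, I obtain, for $z,w$ in the fixed neighbourhood $U':=(\pi|_U)^{-1}(V')$ of $\zeta^0$,
\[
k_{\mathbb{D}^n}(z,w)-C\ \le\ k_{\mathbb{G}_n}(\pi z,\pi w)\ \le\ k_{\mathbb{D}^n}(z,w).
\]
Thus every Gromov product of points of $U'\cap\mathbb{D}^n$ changes by at most $3C$ under $\pi$, so $S_{k_{\mathbb{G}_n}}(\pi p,\pi q,\pi x,\pi w)\ge S_{k_{\mathbb{D}^n}}(p,q,x,w)-6C$ for all $p,q,x,w\in U'\cap\mathbb{D}^n$.

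It remains to produce configurations in $U'\cap\mathbb{D}^n$ with $S_{k_{\mathbb{D}^n}}\to\infty$. Choosing the $W_j$ to be small horoballs at $\omega_j$ inside $\mathbb{D}$ and $\varepsilon>0$ small, the slice
\[
\Sigma:=W_1\times W_2\times\{((1-\varepsilon)\omega_3,\dots,(1-\varepsilon)\omega_n)\}
\]
lies in $U'\cap\mathbb{D}^n$ (for $n=2$ there are no extra coordinates and $\Sigma=W_1\times W_2$), and by the product formula for the polydisc $k_{\mathbb{D}^n}|_\Sigma$ agrees with $\max(\rho_{\mathbb{D}},\rho_{\mathbb{D}})$ in the two free coordinates; hence $(\Sigma,k_{\mathbb{D}^n})$ is isometric to $(W_1,\rho_{\mathbb{D}})\times(W_2,\rho_{\mathbb{D}})$ with the maximum distance. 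Each factor is an unbounded path metric space, horoballs being geodesically convex for $\rho_{\mathbb{D}}$, so by Proposition~\ref{I} this product is not Gromov hyperbolic; picking $p_k,q_k,x_k,w_k\in\Sigma$ with $S_{k_{\mathbb{D}^n}}(p_k,q_k,x_k,w_k)\to\infty$ and transplanting by $\pi$ yields $\sup S_{k_{\mathbb{G}_n}}=\infty$. Hence $\mathbb{G}_n$ is not Gromov $k$-hyperbolic, and since $c_{\mathbb{G}_n}=k_{\mathbb{G}_n}$ by the Lempert-type theorem for the symmetrized polydisc (see \cite{Jarnicki}), it is not Gromov $c$-hyperbolic either.

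The transfer between $\mathbb{D}^n$ and $\mathbb{G}_n$ and the appeal to Proposition~\ref{I} are soft; the one place where real work is needed is the localization estimate at $\zeta$ — one must check that $\mathbb{G}_n$ admits an appropriate local peak (or bounded plurisubharmonic) barrier at Shilov points with pairwise distinct coordinates and that it yields a comparison of Kobayashi distances with a constant uniform over a fixed neighbourhood. This is where I expect the main effort to go. (An alternative to localization is to bound $k_{\mathbb{G}_n}(\pi p_k,\pi q_k)$ from below directly by means of the explicit Carath\'eodory extremals of $\mathbb{G}_n$, at the price of heavier computation.)
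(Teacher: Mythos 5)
Your strategy --- transferring non-hyperbolicity from a corner of $\mathbb{D}^n$ to $\mathbb{G}_n$ via the local biholomorphism $\pi$ near a Shilov point with pairwise distinct coordinates, then invoking Proposition \ref{I} on a two-dimensional horoball slice --- is genuinely different from the paper's, and its soft parts (the Vandermonde Jacobian, $\pi^{-1}(\mathbb{G}_n)=\mathbb{D}^n$, the isometric slice) are fine. But the whole argument hinges on exactly the step you flagged: the additive localization $k_{\mathbb{G}_n}(a,b)\ge k_{\mathbb{G}_n\cap V}(a,b)-C$ for $a,b\in\mathbb{G}_n\cap V'$. This is not in \cite{Jarnicki}, and it does not follow from the standard localization of the Kobayashi--Royden \emph{metric}: integrating the infinitesimal estimate only controls paths staying inside $V$, while an almost-minimizing path between two points of $V'$ may leave $V$. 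The paper's own Proposition A4 handles precisely this escape problem and obtains only a \emph{multiplicative} comparison $c\,k_{D'}\le k_D$ with $c<1$; a multiplicative constant is useless here, since $S$ is an additive combination of distances that tend to infinity along your sequence, so a factor $c\ne 1$ destroys the conclusion $S\to\infty$ (and Gromov hyperbolicity is a quasi-isometry invariant only for path metric spaces, which $(U'\cap\mathbb{D}^n,k_{\mathbb{D}^n})$ and its image need not be). An additive localization near a local peak point is true but is a nontrivial theorem you would have to prove. A second, outright error: the Carath\'eodory part rests on ``$c_{\mathbb{G}_n}=k_{\mathbb{G}_n}$ by the Lempert-type theorem for the symmetrized polydisc.'' No such theorem exists for $n\ge 3$ (where it is known that $\mathbb{G}_n$ is not a Lempert domain), so the $c$-statement does not follow from the $k$-statement this way.

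By contrast, the paper avoids localization entirely: with $p_a=\pi(a,\dots,a)$, $q_a=\pi(a,\dots,a,-a)$, $m_a=\pi(a,\dots,a,0)$ and base point $0$, it bounds $c_{\mathbb{G}_n}(p_a,q_a)$ from below by $2c_{\mathbb{D}}(a^n,0)$ using the symmetric coordinate function $\pi_n$, bounds the remaining four distances above by $c_{\mathbb{D}}(a,0)$ by contractibility of $\pi$, and uses that $c_{\mathbb{G}_n}(m_a,0)\to\infty$ because every boundary point of $\mathbb{G}_n$ is a weak peak point. These estimates give $S_{c_{\mathbb{G}_n}}(p_a,q_a,m_a,0)\to\infty$ and apply verbatim to $k_{\mathbb{G}_n}$, which is why both conclusions come out simultaneously without any appeal to $c=k$. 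To salvage your route you would need to prove the additive localization at $\zeta$ for $k$ and redo it separately for $c$, which is considerably more work than the paper's direct four-point computation.
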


For the convenience of the reader, recall that the {\it symmetrized polydisc} $\mathbb{G}_{n},$ which is of great
relevance due to its properties and role (cf. \cite{AY}, \cite{Costara}), is the image of the holomorphic map
$$
\pi :\mathbb{D}^{n}\rightarrow\mathbb{C}^{n},\ \pi=(\pi_1,\ldots,\pi_n),
$$
$$\pi_k(z_1,\ldots,z_n)=\sum_{1\leq j_1<\ldots<j_k\leq n}z_{j_1}\ldots z_{j_k},\ z_1,\ldots,z_n\in\mathbb{D},\ 1\leq k\leq n,
$$
which is proper from $\mathbb{D}^{n}$ to $\mathbb{G}_{n}$.

Another interesting domain, the {\it tetrablock} (cf. \cite{AWY}), fails to be Gromov $k$-hyperbolic, too. Let
$$\varphi:\mathcal{R}_{II}\rightarrow \mathbb{C}^{3},\  \varphi(z_{11},z_{22},z)=(z_{11},z_{22},z_{11}z_{22}-z^{2}),$$
where $\mathcal{R}_{II}$ denotes the classical  Cartan domain of the second type (in $\mathbb{C}^{3}$), i.e.
$$\mathcal{R}_{II} =\{\widetilde{z}\in\mathcal{M}_{2\times 2}(\mathbb{C}): \widetilde{z}=\widetilde{z}^{t},\ \lVert \widetilde{z}\rVert<1\},$$
where $\lVert \cdot \rVert$ is the operator norm and $\mathcal{M}_{2\times 2}(\mathbb{C})$ denotes the space of $2 \times 2$ complex matrices (we identify a point $(z_{11},z_{22},z)\in\mathbb{C}^{3}$ with a $2\times 2$ symmetric matrix
$ \left( \begin{array}{ll}
                 z_{11}   &  z \\
                 z   &  z_{22}
\end{array} \right) ). $
Then  $\varphi$ is a proper holomorphic map and $\varphi(\mathcal{R}_{II})=\mathbb{E}$ is a domain, called the tetrablock.
\begin{propozycja}\label{tetrablock}
$\mathbb{E}$ is not Gromov $k$-hyperbolic.
\end{propozycja}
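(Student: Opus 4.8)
The plan is to locate inside $\mathbb{E}$ a complex submanifold biholomorphic to the bidisc on which the Kobayashi distance of $\mathbb{E}$ coincides with the product Kobayashi distance of $\mathbb{D}^2$, and then combine Corollary \ref{III} with the elementary observation that Gromov hyperbolicity in the sense of Definition \ref{defghyp} is inherited by every subset equipped with the restricted distance (the supremum defining it can only decrease when the index set is made smaller). In particular, no geodesic or retraction argument is needed for the last step.

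First I would produce the submanifold. Setting $z=0$ in the defining triple of $\mathcal{R}_{II}$, the matrix $\widetilde z$ becomes $\operatorname{diag}(z_{11},z_{22})$, which lies in $\mathcal{R}_{II}$ exactly when $z_{11},z_{22}\in\mathbb{D}$; hence $A:=\varphi\bigl(\{(a,b,0):a,b\in\mathbb{D}\}\bigr)=\{(a,b,ab):a,b\in\mathbb{D}\}\subset\mathbb{E}$. Being the graph over $\mathbb{D}^2$ of the holomorphic function $(a,b)\mapsto ab$, the set $A$ is a closed complex submanifold of $\mathbb{E}$, and $\iota:\mathbb{D}^2\to A$, $\iota(a,b)=(a,b,ab)$, is a biholomorphism whose inverse is the projection onto the first two coordinates; consequently $k_A(\iota(a,b),\iota(a',b'))=k_{\mathbb{D}^2}((a,b),(a',b'))=\max\{p(a,a'),p(b,b')\}$, where $p=c_{\mathbb{D}}$ denotes the Poincar\'e distance.

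Next I would pin down $k_{\mathbb{E}}$ on $A$ by a sandwich. Monotonicity of the Kobayashi distance under the inclusion $A\hookrightarrow\mathbb{E}$ gives $k_{\mathbb{E}}(\iota(a,b),\iota(a',b'))\le k_A(\iota(a,b),\iota(a',b'))=\max\{p(a,a'),p(b,b')\}$. For the opposite direction, observe that each of the two coordinate projections $\mathbb{E}\ni(x_1,x_2,x_3)\mapsto x_j\in\mathbb{D}$ ($j=1,2$) is a well defined holomorphic map into $\mathbb{D}$, since a diagonal entry of a symmetric matrix of operator norm $<1$ has modulus $<1$; hence $c_{\mathbb{E}}(x,y)\ge\max\{p(x_1,y_1),p(x_2,y_2)\}$ for all $x,y\in\mathbb{E}$, and on $A$ this lower bound is precisely $\max\{p(a,a'),p(b,b')\}$. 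Together with $c_{\mathbb{E}}\le k_{\mathbb{E}}$ this shows that $\iota$ is an isometry from $(\mathbb{D}^2,k_{\mathbb{D}^2})$ onto $(A,k_{\mathbb{E}}|_A)$. (Alternatively one can note that $r(x_1,x_2,x_3)=(x_1,x_2,x_1x_2)$ is a holomorphic retraction of $\mathbb{E}$ onto $A$, which forces $k_{\mathbb{E}}|_A=k_A$ directly.)

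Finally, by Corollary \ref{III} with $D_1=D_2=\mathbb{D}$, the space $(\mathbb{D}^2,k_{\mathbb{D}^2})$ is not Gromov hyperbolic, hence neither is $(A,k_{\mathbb{E}}|_A)$. For $p,q,x,w\in A$ the quantity $S_{k_{\mathbb{E}}}(p,q,x,w)$ depends only on the values of $k_{\mathbb{E}}$ at points of $A$, so $\sup_{p,q,x,w\in\mathbb{E}}S_{k_{\mathbb{E}}}(p,q,x,w)\ge\sup_{p,q,x,w\in A}S_{k_{\mathbb{E}}}(p,q,x,w)=\infty$, i.e.\ $\mathbb{E}$ is not Gromov $k$-hyperbolic. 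The only place requiring genuine (if short) work is the lower estimate for $k_{\mathbb{E}}$ on $A$ via the coordinate projections; the construction of $A$ and the final descent to a subspace are routine, the latter being exactly where the geodesic-free Definition \ref{defghyp} is convenient.
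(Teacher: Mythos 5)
Your proof is correct, but it takes a genuinely different route from the paper's. The paper argues with the explicit formula for $k_{\mathbb{E}}(0,\cdot)$ from \cite[Corollary 3.7]{AWY}, the automorphisms $\widetilde{\Phi}_a$ of $\mathbb{E}$ induced from $\mathcal{R}_{II}$, and a concrete complex geodesic joining $P_a=(a,a,a^2)$ and $Q_a=(a,-a,-a^2)$ whose Kobayashi midpoint $R_a=(a,0,0)$ escapes to the boundary; it then estimates the four-point quantity directly, invoking Proposition A1(b). You instead exhibit the holomorphic retract $A=\{(a,b,ab):a,b\in\mathbb{D}\}\cong\mathbb{D}^2$, prove by the sandwich $c_{\mathbb{E}}\ge\max\{p\circ\pi_1,p\circ\pi_2\}$ and $k_{\mathbb{E}}\le k_A$ that $A$ is an isometric copy of the bidisc, and then combine Corollary \ref{III} with the observation that the four-point condition of Definition \ref{defghyp} passes to subsets with the restricted distance. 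All steps check out: $(a,b,ab)\in\mathbb{E}$ if and only if $a,b\in\mathbb{D}$; the diagonal entries of a symmetric contraction have modulus less than $1$, so the coordinate projections are indeed holomorphic maps into $\mathbb{D}$; and your caveat that subset-inheritance is legitimate precisely because the paper's definition is geodesic-free is exactly the right point to flag. It is worth noting that the paper's witness configuration $P_a,Q_a,R_a,0$ lies entirely inside your submanifold $A$, so your argument is in effect an abstraction of theirs. What your route buys is independence from the \cite{AWY} distance formula and from Proposition A1(b), reusing instead machinery (Corollary \ref{III}) already established earlier in the paper; what it gives up is the explicit quantitative description of the degenerating quadruples, which is not needed for the statement itself.
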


Since $\Bbb G_2$ and $\mathbb{E}$ are bounded $\mathbb{C}$-convex domains (see \cite[Theorem 1 (i)]{G2} and \cite[Corollary 4.2]{Zwonek}),
it follows that they are not Gromov $c^i$- nor $b$-hyperbolic either.

Buckley in \cite{Buckley}, claimed that it is because of the flatness of the boundary rather than the lack of smoothness that Gromov hyperbolicity fails. Recently, Gaussier and Seshadri have provided a proof of that conjecture. More precisely, their main result in \cite[Theorem 1.1]{Gaussier} states that any bounded convex domain in $\mathbb{C}^{n}$ whose boundary is $\mathcal{C}^{\infty}$-smooth and contains an analytic disc, is not Gromov $k$-hyperbolic. Lemma 5.4 in their proof used the $\mathcal{C}^{\infty}$ assumption in an essential way.
Our aim is to prove this result in a shorter way in $\mathbb{C}^{2}$, assuming only $\mathcal{C}^{1,1}$-smoothness. Moreover, the proofs of the facts we use are more elementary.
\begin{Theorem}\label{Gaussier}
Let $D$ be a convex domain in $\mathbb{C}^{2}$ containing no complex lines.\footnote{Then D is biholomorphic to a bounded domain
(cf. \cite[Theorem 7.1.8]{Jarnicki}).} Assume that $\partial D$ is $\mathcal{C}^{1,1}$-smooth and contains an analytic disc. Then $D$ is not Gromov $k$-hyperbolic.
\end{Theorem}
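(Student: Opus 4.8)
The plan is to reduce the statement to an application of the weak midpoints property together with the product-space obstructions (Proposition \ref{I} and the attendant Remark), by producing inside $(D,k_D)$ a sequence of ``triangles'' whose Gromov defect $S_{k_D}$ blows up. Concretely, I would first use the presence of a nontrivial analytic disc $f\colon\mathbb{D}\to\partial D$ in the boundary, and the $\mathcal{C}^{1,1}$-smoothness, to pin down the local geometry: after an affine change of coordinates, near a point $p_0=f(0)$ the domain looks like $\{\Re z_2 > \psi(z_1,\Im z_2)\}$ with $\psi$ of class $\mathcal{C}^{1,1}$, and the disc lies in $\{\Re z_2=0\}$ with $\psi$ vanishing there to second order. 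The $\mathcal{C}^{1,1}$ hypothesis gives the crucial two-sided quadratic control $\psi(z_1,\Im z_2)\lesssim \operatorname{dist}(\cdot,\text{disc})^2$ from above, which is exactly what one needs to get good estimates on the Kobayashi distance transverse to the disc; this replaces the more delicate $\mathcal{C}^\infty$ argument (their Lemma 5.4) by elementary comparison with a product model.

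Next I would set up explicit test points. Choose a point $q$ on the analytic disc, approach $\partial D$ along the disc by points $a_k\to f(t_k)$ and also along a transverse complex direction by points $b_k$ converging to the same boundary point, arranged so that $k_D(a_k,b_k)\to\infty$ while the ``horizontal'' (disc-direction) and ``vertical'' (transverse) pieces of distance are each comparable to roughly half of $k_D(a_k,b_k)$. The key quantitative input is a comparison, valid in a fixed neighborhood of $p_0$, between $k_D$ and the Kobayashi distance of a product-type model domain $\mathbb{D}\times H$ (with $H$ a half-plane, or a one-dimensional strongly pseudoconvex-type slice), using monotonicity of $k_D$ under inclusion of domains on one side and the Lempert/convexity machinery (localization of the Kobayashi distance for convex domains, via \cite{Lempert} and the estimates in \cite{NPZ}) on the other. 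From this comparison and the classical computation of the Kobayashi distance on $\mathbb{D}$ and on a half-plane, one reads off that the triangle with vertices $a_k$, $b_k$, and a midpoint $z_k$ (obtained by moving half-way along each of the two coordinate directions in the model) satisfies the asymptotics \eqref{wmp}, and moreover $(a_k,b_k)_{z_k}$ stays bounded while $\min\{(a_k,x)_{z_k},(x,b_k)_{z_k}\}$ can be made arbitrarily large for a suitable auxiliary point $x$. This forces $S_{k_D}$ to be unbounded.

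The main obstacle, and where most of the work goes, is establishing the two-sided Kobayashi estimate near the analytic disc with only $\mathcal{C}^{1,1}$ regularity: one needs the lower bound $k_D(a_k,b_k)\to\infty$ (which follows from Kobayashi hyperbolicity, since $D$ contains no complex lines and hence is biholomorphic to a bounded domain, so points approaching the boundary go to infinity in $k_D$), but one also needs the sharp \emph{upper} bounds showing that the horizontal and vertical costs are each $\sim\frac12 k_D(a_k,b_k)$, and these require the quadratic majorant for $\psi$ together with a localization lemma. I would isolate this as a preliminary lemma: for a convex domain with $\mathcal{C}^{1,1}$ boundary containing an analytic disc, there is a neighborhood $U$ of an interior point of the disc and a constant $C$ such that for $z,w\in U\cap D$ one has $k_D(z,w)\le k_{\Delta}(z',w') + k_{V}(z'',w'') + C$ where $\Delta$ is a disc in the disc-direction and $V$ a planar slice in the transverse direction; the proof of such a lemma is where the convexity (giving $k_D=l_D$ and extremal analytic discs) and the $\mathcal{C}^{1,1}$ quadratic control combine. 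Once that lemma is in hand, the rest is bookkeeping: plug in the explicit sequences, verify \eqref{wmp} and the blow-up of the Gromov product defect, and invoke Proposition \ref{I} (or rather its proof, which only uses the weak midpoints property) to conclude that $(D,k_D)$ is not Gromov hyperbolic.
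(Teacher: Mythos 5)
You have the right geometric picture---localize at the flat piece of the boundary, build a ``fat triangle'' with two vertices spread out along the disc at small transverse depth and a midpoint between them, and show the four-point quantity $S_{k_D}$ blows up---and this is indeed the skeleton of the paper's argument (there the four points are $p_\delta,q_\delta$ near two well-separated contact points of the slice $D_\delta=D\cap\{z_1=\delta\}$, the central point $\tilde s_\delta$, and a fixed interior base point $a$). But there are two genuine gaps. First, the logical endgame is misstated: you cannot conclude by ``verifying \eqref{wmp} and invoking Proposition \ref{I} (or its proof).'' The weak midpoints property by itself is perfectly compatible with Gromov hyperbolicity (every geodesic space, e.g.\ the hyperbolic plane, has exact midpoints); what makes the proof of Proposition \ref{I} work is the \emph{product structure}, namely the fourth point $z=(x_3,y_2)$ which is equidistant from $x$, $y$ and $w$ because the second factor is unbounded and independent of the first. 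So your plan stands or falls with the asserted two-sided quasi-isometric comparison of $k_D$ near the disc with $k_{\mathbb{D}\times H}$, and that comparison is exactly the hard analytic content of the theorem. The upper bound (a product-like family of analytic discs inside $D$) is plausible by convexity, but the \emph{lower} bound---that two points at depth $\delta$ lying over well-separated points of the boundary disc satisfy $2k_D\geq -\log d'_D(p_\delta)-\log d'_D(q_\delta)-C$, i.e.\ that the horizontal separation is genuinely felt and cannot be shortcut through the interior---does not follow from Lempert's theorem or from ``localization of the Kobayashi distance,'' and you give no argument for it. This is precisely where the paper does real work: its inequality (3) is obtained by using that $k_D$ is intrinsic to produce a near-midpoint $m_\delta$ at Euclidean distance $\geq r/2$ from both endpoints, bounding $\lVert p_\delta-\check p_\delta\rVert\leq c_5\,d'_D(p_\delta)$ via the $\mathbb{C}$-convexity estimates of \cite{NPZ} (the only step that restricts the theorem to $\mathbb{C}^2$), and then comparing $D$ with a supporting half-space at $\check p_\delta$.

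Two further points. Your test configuration is off: you take $a_k$ along the disc and $b_k$ transverse ``converging to the same boundary point,'' whereas the product template requires two points far apart \emph{along} the disc at the same small transverse depth; making ``far apart'' quantitative while keeping control of the transverse distance $d'_D$ is itself nontrivial and is the content of the paper's first lemma (two inscribed discs of a fixed radius $r$ in $D_\delta$ touching $\partial D$ at points more than $5r$ apart, obtained from the incenter of the boundary disc). Finally, the $\mathcal{C}^{1,1}$ hypothesis is not used in the paper to get a two-sided quadratic normal form for a graphing function; it enters only through the upper bound $2k_D(z,w)\leq-\log d_D(z)+C$ (Proposition A2) and through the ability to translate the inscribed discs into the slices $D_\delta$. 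As written, your proposal replaces the theorem's main difficulty by an unproved comparison lemma, so it does not yet constitute a proof.
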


Besides, we give a partial answer to the question raised in \cite{Bonk}.
\begin{Theorem}\label{Pascal}
Let $D$ be a $\mathcal{C}^{1,1}$-smooth convex bounded domain in $\mathbb{C}^{2}$
admitting a defining function of the form $\varrho (z)=-\Re z_{1}+\psi (|z_{2}|)$
near the origin, where $\psi$ is a $\mathcal{C}^{1,1}$-smooth nonnegative convex function near $0$ satisfying $\psi (0)=0,$ and
\begin{equation}\label{infinite type}
\limsup _{x\rightarrow 0}\frac{\log \psi (|x|)}{\log |x|}=\infty.\footnote{If $\psi$ is $\mathcal{C}^{\infty},$ then $0$ is of infinite type if and only if condition (\ref{infinite type}) holds.}
\end{equation}
Then $D$ is not Gromov $k$-hyperbolic.
\end{Theorem}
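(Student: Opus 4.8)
The plan is to show that $(D, k_D)$ fails the thin-triangles (or rather the $\delta$-hyperbolicity) condition by exhibiting, for every large parameter, a quadruple of points $p, q, x, w$ for which $S_{k_D}(p,q,x,w)$ is arbitrarily large. The geometric intuition is the one behind Corollary \ref{III}: the infinite-type condition \eqref{infinite type} forces the boundary near the origin to be so flat in the $z_2$-direction that the domain ``looks like a product'' $\{\Re z_1 > \text{small}\} \times \{|z_2| < \text{large}\}$ at suitable scales, and products of unbounded path metric spaces are not Gromov hyperbolic by Proposition \ref{I}. So the heart of the matter is to produce good two-sided estimates on $k_D$ near $0$ and then feed a well-chosen configuration of points into Definition \ref{defghyp}.

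First I would fix a convenient model. Near $0$, $D$ contains a set of the form $U_\varepsilon = \{z : \Re z_1 > \psi(|z_2|), |z_2| < \varepsilon, |z_1| < \varepsilon\}$ and is contained, after possibly shrinking, in a slightly larger convex set of the same shape; the $\mathcal{C}^{1,1}$ convexity of $\psi$ is what makes these comparisons clean. By the localization principle for the Kobayashi distance on convex domains (or just monotonicity $k_D \le k_{U}$ for $U \subset D$ together with a lower bound coming from a bounded holomorphic peak function at $0$), estimating $k_D$ between points deep inside $U_\varepsilon$ reduces to estimating the Kobayashi distance of the model half-plane-times-disc-type domain. For points of the form $z = (x, 0)$ with $x \to 0^+$ one has $k_D(z, z_0) \sim \tfrac12 \log(1/x)$ up to bounded error (the $\Re z_1$-direction contributes a half-plane-type distance), while the condition \eqref{infinite type} says that along a subsequence $x_j \to 0$ the ``width'' $\varepsilon_j$ in the $z_2$-direction over which the domain stays essentially flat can be taken with $\log(1/\varepsilon_j) = o(\log(1/x_j))$, so that a displacement of size $\varepsilon_j$ in $z_2$ is ``cheap'' compared to the depth $\log(1/x_j)$.

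Next I would write down the configuration. Pick $x_j \to 0$ realizing the limsup, set $p_j = (x_j, 0)$, $q_j = (x_j^2, 0)$ (or some other pair with $k_D$-distance $\asymp \log(1/x_j) \to \infty$ and both points at comparable ``depth''), and $w_j = (x_j, c\,\varepsilon_j)$ for a constant $c < 1$, i.e.\ $w_j$ sits at the same depth as $p_j$ but is pushed out toward the flat part of the boundary; choose $x_j$ a further point, e.g. a fixed interior base point or $(1,0)$-type point pulled into $D$. The claim is that the Gromov product $(p_j, q_j)_{w_j}$ stays bounded — because to go from $p_j$ or $q_j$ to $w_j$ one must first travel ``down'' to depth comparable to $\log(1/\varepsilon_j)$ and the two such excursions share essentially none of their length — whereas $\min\{(p_j, x)_{w_j}, (x, q_j)_{w_j}\}$ grows like $\log(1/x_j)$ or at least like $\log(1/x_j) - \log(1/\varepsilon_j) \to \infty$. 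Concretely this is the same bookkeeping as in the proof of Proposition \ref{I} applied to the comparison model, so I would either quote that proposition via a quasi-isometric identification of a neighborhood with a model product, or redo the four-point estimate directly using the product-type lower bound $k_D \ge \max(k_{\text{half-plane}}, k_{\text{disc}})$ and an upper bound by concatenating curves in each factor.

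The main obstacle I expect is the lower bound for $k_D$ that certifies the Gromov product stays bounded: upper bounds on $k_D$ are easy (explicit analytic discs / curves), but to show $(p_j,q_j)_{w_j} = O(1)$ one needs $k_D(p_j, w_j) + k_D(w_j, q_j) - k_D(p_j, q_j)$ to be controlled, which requires a sharp \emph{lower} bound on $k_D(p_j,w_j)$ and $k_D(w_j,q_j)$ of the form ``distance $\ge$ (depth cost) $+$ (lateral cost) $-$ $O(1)$'' together with a matching \emph{upper} bound on $k_D(p_j,q_j)$ — i.e.\ one must rule out a shortcut. Here the $\mathcal{C}^{1,1}$ hypothesis on $\psi$ does the work that $\mathcal{C}^\infty$ did in \cite{Gaussier}: convexity plus the one-sided second-order control let one trap $D$ between two explicit model domains on which the Kobayashi distance can be computed or estimated via Lempert, and \eqref{infinite type} guarantees the scale separation $\log(1/\varepsilon_j) = o(\log(1/x_j))$ that makes the lateral cost negligible. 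Once these estimates are in place, plugging the configuration into Definition \ref{defghyp} and letting $j \to \infty$ gives $\sup S_{k_D} = \infty$, hence $D$ is not Gromov $k$-hyperbolic.
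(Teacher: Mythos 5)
Your high-level picture (infinite type forces a product-like structure at a sequence of scales, so one should run the four-point computation behind Proposition \ref{I}) agrees with the spirit of the paper, and you correctly locate the difficulty in the lower bounds. However, the concrete configuration you propose cannot work, and the lower-bound mechanism is absent rather than merely deferred. Take your points $p_j=(x_j,0)$, $q_j=(x_j^2,0)$, $w_j=(x_j,c\varepsilon_j)$ with $c<1$ a \emph{fixed} constant and $c\varepsilon_j$ a fixed fraction of the radius of the slice $D\cap\{z_1=x_j\}$. The analytic disc $\zeta\mapsto(x_j,\rho_j\zeta)$, $\rho_j$ that radius, lies in $D$, so $k_D(p_j,w_j)\le\tanh^{-1}c=O(1)$. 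Since Gromov products based at $u$ and at $v$ differ by at most $2k_D(u,v)$, for \emph{every} choice of the remaining point $z$ one gets
$$
S_{k_D}(p_j,q_j,z,w_j)\le (p_j,z)_{w_j}\le (p_j,z)_{p_j}+2k_D(p_j,w_j)=2\tanh^{-1}c ,
$$
so the quadruple is uniformly degenerate and can never certify non-hyperbolicity (the variant with $w_j$ and the base point swapped dies for the same reason). The divergent separation must be the \emph{lateral} one: the two far points have to sit at the same depth $\psi(x)$, at lateral positions $\pm(1-\alpha(x))x$ with $\alpha(x)\to0$, so that each is at Kobayashi distance about $-\tfrac12\log\alpha(x)\to\infty$ from the slice center $q=(\psi(x),0)$, with $q$ a provable near-midpoint, and the fourth point is a fixed interior point. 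Note also that your explanation of why $(p_j,q_j)_{w_j}$ stays bounded (``the two excursions to $w_j$ share none of their length'') inverts the meaning of the Gromov product: unshared excursions make $(p,q)_w$ large, not small.

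Second, even with the correct configuration, the substance of the theorem is in three lower bounds: $k_D(s,q),\,k_D(r,q)\ge-\tfrac12\log\alpha(x)-O(1)$ and, crucially, $k_D(r,s)\ge-\log\alpha(x)-O(1)$ (no shortcut through the interior of $D$), combined with $d_D(r)\asymp\alpha(x)\,x\psi'(x)$ and $d_D(q)=\psi(x)$ to identify the divergent quantity as $\tfrac12\log\bigl(x\psi'(x)/\psi(x)\bigr)$, which is unbounded along a sequence precisely because of \eqref{infinite type}. Your proposal to ``trap $D$ between model product domains'' does not yield these estimates: near $0$ the domain is not contained in any useful product, and the containing half-space $\{\Re z_1>0\}$ only controls the $z_1$-coordinate. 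The paper's device is to compose the affine projection onto $\{z_1=\psi(x)\}$ \emph{along the complex tangent line to $\partial D$ at $(\psi(x),x)$} with a renormalization, producing a holomorphic map $f_+:D\to\{\Re z>0\}$ with $f_+(q)=1$ and $f_+(s)=\alpha(x)$ (and a mirror map $f_-$ for $r$); for $k_D(r,s)$ neither map alone suffices, and one must split an almost-minimizing curve at a point where both $|f_+|$ and $|f_-|$ are bounded below, which needs a separate connectedness argument. Without a substitute for these maps the lower bounds, and hence the proof, do not close.
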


Finally, note that there is no connection between Gromov hyperbolicity and pseudoconvexity. Indeed, take any strictly pseudoconvex domain $G.$ As we have already mentioned, $G$ is Gromov $k$-hyperbolic, and $k_G$ and $c_G$ are bilipschitz equivalent. Hence $G$ is Gromov $c$-hyperbolic, too.  Assume that,
respectively, $A\Subset G$ and $B$ is a relatively closed subset of $G$ such that $G\setminus A$ is a domain and that $B$ is negligible with respect to the $(2n - 2)$-dimensional Hausdorff measure.
Then $G\setminus A$ is Gromov
$c$-hyperbolic and $G\setminus B$ is Gromov $k$-hyperbolic, since
$$c_{G\setminus A} = c_{G}|_{(G\setminus A)\times (G\setminus A)}$$
(by the Hartogs extension theorem) and
$$
k_{G\setminus B} = k_{G}|_{(G\setminus B)\times (G\setminus B)}
$$
(cf. \cite[Theorem 3.4.2]{Jarnicki}).

However, the example with $G\setminus B$ does not have a smooth boundary.  The next proposition yields, in particular, a
family of non-pseudoconvex domains with smooth boundaries which are Gromov $k$-hyperbolic.

\begin{propozycja}\label{ball}
Let $G$ be a bounded domain in $\Bbb C^n\ (n\geq 2).$ Assume that $D\Subset G$ is a $\mathcal{C}^{2}$-smooth domain in
$\Bbb C^n$ and its Levi form has at least one positive eigenvalue at each boundary point. Then
$G\setminus D$ is a domain such that $k_{G\setminus D}$ is quasi-isometrically equivalent
to $k_{G}|_{(G\setminus D)\times (G\setminus D)}.$\footnote{One can show that these distances are not bilipschitz equivalent.}

In particular, if $G$ is Gromov $k$-hyperbolic, then so is $G\setminus\overline{D}.$
\end{propozycja}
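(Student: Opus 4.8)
The plan is to show that the two Kobayashi distances are comparable up to an additive constant. One inequality is free: since $G\setminus D\subset G$, the distance-decreasing property of the Kobayashi pseudodistance gives $k_G(z,w)\le k_{G\setminus D}(z,w)$ for all $z,w\in G\setminus D$. The work is entirely in the reverse estimate $k_{G\setminus D}(z,w)\le k_G(z,w)+C$ for a constant $C$ independent of $z,w$. The key point is that $\overline D$ is, locally near $\partial D$, removable for Kobayashi geodesics \emph{up to bounded error}, because at each boundary point of $D$ the Levi form has a positive eigenvalue: this produces, on a one-sided neighborhood, a family of analytic discs transverse to $\partial D$ along which one can ``push off'' a point of $\partial D$ a definite Kobayashi-$G$ distance while staying in $G\setminus D$. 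Concretely, by compactness of $\partial D$ and the $\mathcal C^2$-smoothness, there is a neighborhood $U$ of $\partial D$ and $\varepsilon_0>0$ such that for every $p\in(G\setminus\overline D)\cap U$ close to $\partial D$ there is a point $p'\in G\setminus\overline D$ with $\operatorname{dist}(p',\partial D)\ge\varepsilon_0$ and $k_G(p,p')\le C_0$; this uses the positive Levi eigenvalue to embed a small holomorphic disc through a neighborhood of the relevant boundary point that avoids $\overline D$ on one side, exactly as in the standard local peak/analytic-disc constructions (e.g. via a local holomorphic support function coming from the positive eigenvalue of the Levi form).

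With that lemma in hand, I would argue as follows. Fix $z,w\in G\setminus D$ and $\varepsilon>0$, and take a curve $\gamma$ in $G$ from $z$ to $w$ with $\int\kappa_G(\gamma;\dot\gamma)<k_G(z,w)+\varepsilon$. If $\gamma$ already avoids $\overline D$ we are done, since then $k_{G\setminus D}(z,w)\le\int\kappa_{G\setminus D}(\gamma;\dot\gamma)$, and near $\overline D$ the metric $\kappa_{G\setminus D}$ can be bounded in terms of $\kappa_G$ plus a controlled correction — but in general $\gamma$ will cross $\overline D$. In that case, let $z'$ (resp. $w'$) be the first (resp. last) point where $\gamma$ meets the compact set $K:=\{x\in G:\operatorname{dist}(x,\partial D)\le\varepsilon_0\}\cap\overline D$-collar; these points lie at bounded $k_G$-distance from $z,w$ along $\gamma$ only if $z,w$ are close to $D$, so first reduce to that case, replacing $z,w$ by the pushed-off points $z',w'$ furnished by the lemma at the cost of an additive constant $2C_0$. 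Now both endpoints are at distance $\ge\varepsilon_0$ from $D$. The remaining task is to connect two such points inside $G\setminus D$ with Kobayashi-$(G\setminus D)$ length at most $k_G(z,w)+C$. Here I would use that on the fixed compact set $\{\operatorname{dist}(\cdot,\overline D)\ge\varepsilon_0/2\}\cap G$ the two metrics $\kappa_G$ and $\kappa_{G\setminus D}$ are bounded above and below by fixed multiples of the Euclidean metric (Kobayashi metrics are locally bounded on any domain, and bounded below on relatively compact subsets of a bounded domain), so a $k_G$-geodesic-ish curve staying a fixed distance away from $D$ can be copied over with only a multiplicative loss, which, combined over the bounded ``bad'' region, contributes another additive constant. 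The one subtlety is that the original near-geodesic $\gamma$ might dip deep into $D$ and come back; but any such excursion can be short-circuited: its two endpoints lie in $\partial(\text{collar})$, a fixed compact subset of $G\setminus D$, hence are joined by a path in $G\setminus D$ of $k_{G\setminus D}$-length bounded by a universal constant $C_1=\operatorname{diam}_{k_{G\setminus D}}$ of that compact set. Summing: $k_{G\setminus D}(z,w)\le k_G(z,w)+2C_0+C_1+(\text{multiplicative-to-additive conversion on the collar})$, and letting $\varepsilon\to0$ gives the claim with an explicit $c_1=1$ and finite $c_2$.

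The main obstacle — the step I expect to require the real work — is the local lemma: producing, uniformly over $\partial D$, an analytic disc in $G$ through (a one-sided neighborhood of) each boundary point of $D$ that stays outside $\overline D$ and has definite size, so that the ``push-off'' cost $C_0$ is uniform. This is where the hypothesis that the Levi form has a positive eigenvalue at every point of $\partial D$ is used: it yields, after a local change of coordinates near $p\in\partial D$, a complex tangent direction along which $D$ looks like the exterior of a strongly pseudoconvex hypersurface, hence a local holomorphic separating/peak function, and from it the desired disc; compactness of $\partial D$ and continuity of the Levi form then upgrade this to a uniform statement. The rest is bookkeeping with the distance-decreasing property, local boundedness of Kobayashi metrics on subdomains, and the compactness of the collar $\{\operatorname{dist}(\cdot,\partial D)\le\varepsilon_0\}\cap G$. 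Finally, the ``in particular'' statement is immediate: Gromov hyperbolicity is a quasi-isometry invariant of path metric spaces (as recalled after the first definition), $k_{G\setminus\overline D}$ is intrinsic, and a restriction of $k_G$ that is quasi-isometrically equivalent to $k_{G\setminus\overline D}$ transports Gromov hyperbolicity from $G$ to $G\setminus\overline D$.
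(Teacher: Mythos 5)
Your overall strategy mirrors the paper's: the inclusion $G\setminus\overline D\subset G$ gives $k_G\le k_{G\setminus\overline D}$ for free, the positive Levi eigenvalue is used to show that a collar of $\partial D$ inside $G\setminus\overline D$ has bounded Kobayashi diameter (you do this with explicit analytic discs in the complement of $\overline D$; the paper instead integrates the estimate $\kappa_{G\setminus\overline D}(z;X)\lesssim |X|\,d(z)^{-3/4}$ of Proposition A3, taken from \cite{DNT}, which encodes the same geometric mechanism), and the two regimes are glued through a compact interface. However, two steps are genuinely gapped. First, you never prove that $G\setminus\overline D$ is connected; this is part of the statement and is used implicitly whenever you invoke the finiteness of the $k_{G\setminus D}$-diameter of the collar boundary or join the pushed-off points. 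The paper disposes of it with a farthest-point argument: a bounded component of $G\setminus\overline D$ would force a concave boundary point of $D$, contradicting the positive Levi eigenvalue.

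Second, and more seriously, your comparison of $\kappa_{G\setminus D}$ with $\kappa_G$ on $\{\operatorname{dist}(\cdot,\overline D)\ge\varepsilon_0/2\}\cap G$ is justified incorrectly: this set is \emph{not} relatively compact in $G$ (its closure meets $\partial G$), and neither Kobayashi metric is bounded above by a multiple of the Euclidean metric near $\partial G$. That is precisely the regime where $k_G(z,w)$ is large and the estimate matters, so the claim that a near-geodesic can be ``copied over with only a multiplicative loss'' is unsupported as written. The correct tool is the localization principle for the Kobayashi metric near $\partial G$ (Forstneri\v{c}--Rosay; Proposition A4 of the paper), which gives $\kappa_{G\setminus\overline D}(z;X)\le C\,\kappa_G(z;X)$ for $z$ away from $\overline D$ and hence, after integration, $k_{G\setminus\overline D}\le C\,k_G$ on that region. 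Note that this constant is genuinely multiplicative, so your concluding assertion that one may take $c_1=1$ is unwarranted (harmless, since only quasi-isometry is claimed). Two smaller points: in the push-off lemma you should record the bound for $k_{G\setminus\overline D}(p,p')$ rather than $k_G(p,p')$ (your disc lies in $G\setminus\overline D$, so you do get the stronger bound, and it is the one you actually need); and the excursions of the near-geodesic into the collar must be short-circuited all at once, from the first entry to the last exit, rather than one by one, since their number is not controlled.
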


\begin{corollary}\label{str_psc}
If $D\Subset G$ are strictly pseudoconvex domains in $\Bbb C^n,$ then $G\setminus\overline{D}$ is a Gromov $k$-hyperbolic domain.
\end{corollary}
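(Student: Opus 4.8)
The plan is to deduce Corollary \ref{str_psc} directly from Proposition \ref{ball} together with the Balogh--Bonk theorem. First I would observe that the hypotheses of Proposition \ref{ball} are satisfied: since $D\Subset G$ are strictly pseudoconvex, $D$ is in particular a $\mathcal{C}^2$-smooth domain relatively compact in $G$, and strict pseudoconvexity of $D$ means its Levi form is positive definite at every boundary point, so a fortiori it has at least one positive eigenvalue there. Hence Proposition \ref{ball} applies and gives that $G\setminus\overline{D}$ is a domain with $k_{G\setminus\overline{D}}$ quasi-isometrically equivalent to the restriction $k_G|_{(G\setminus\overline{D})\times(G\setminus\overline{D})}$.

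Next I would recall that Gromov hyperbolicity is a quasi-isometry invariant in the bijective case (as noted in the excerpt, using \cite[Theorems 3.18, 3.20]{Jesus}), and more simply that the property $\sup S_d <\infty$ from Definition \ref{defghyp} transfers across a quasi-isometric equivalence of distances on the same set, since each Gromov product $(x,y)_z$ changes by at most an additive constant under such an equivalence, hence $S_d$ changes by at most an additive constant and boundedness is preserved. Therefore it suffices to show that $\bigl(G\setminus\overline{D},\, k_G|_{(G\setminus\overline{D})\times(G\setminus\overline{D})}\bigr)$ is Gromov hyperbolic.

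For this, I would invoke \cite[Theorem 1.4]{Bonk}: a bounded strictly pseudoconvex domain $G$ is Gromov $k$-hyperbolic, i.e. $(G,k_G)$ is Gromov hyperbolic. The final point is that Gromov hyperbolicity, in the form $\sup_{p,q,x,w} S_d(p,q,x,w)<\infty$, passes to arbitrary subsets with the restricted distance, because the supremum over $p,q,x,w\in G\setminus\overline{D}$ is taken over a subfamily of the supremum over $p,q,x,w\in G$. Combining these observations, $(G\setminus\overline{D}, k_G|)$ is Gromov hyperbolic, and by the quasi-isometric equivalence so is $(G\setminus\overline{D}, k_{G\setminus\overline{D}})$, which is exactly the assertion.

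The only genuine subtlety — and the step I would be most careful about — is the passage of Gromov hyperbolicity through the quasi-isometric equivalence of $k_{G\setminus\overline{D}}$ and $k_G|$: one must check this at the level of Definition \ref{defghyp} rather than quoting the path-metric-space invariance, since it is not a priori evident that $k_{G\setminus\overline{D}}$ remains intrinsic on the non-convex set $G\setminus\overline{D}$ (in fact $k$ is always intrinsic, being the integrated form of $\kappa$, but one should not need that here). The clean way is the elementary additive-constant estimate on Gromov products sketched above, which requires no path structure and no bijectivity beyond the identity map. Everything else is a direct citation of Proposition \ref{ball} and \cite[Theorem 1.4]{Bonk}.
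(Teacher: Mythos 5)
Your overall route is the same as the paper's: check the hypotheses of Proposition \ref{ball} (strict pseudoconvexity of $D$ gives $\mathcal{C}^2$-smoothness and a positive Levi eigenvalue at every boundary point), quote \cite[Theorem 1.4]{Bonk} for the Gromov $k$-hyperbolicity of $G$, and transfer hyperbolicity to $(G\setminus\overline{D},k_{G\setminus\overline{D}})$. The first two steps, and the observation that the four-point condition of Definition \ref{defghyp} restricts to subsets, are fine. The gap is in the transfer step, precisely the one you flag as the ``only genuine subtlety''. Your claim that under a quasi-isometric equivalence ``each Gromov product $(x,y)_z$ changes by at most an additive constant'' is false: Proposition \ref{ball} only gives $k_G\le k_{G\setminus\overline{D}}\le c_1 k_G+c_2$ with a multiplicative constant $c_1$ that is in general $>1$, and a multiplicative distortion changes the Gromov products --- and, worse, the quantity $S_d$, which is a \emph{difference} of Gromov products --- by unbounded amounts. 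Indeed, the four-point condition is \emph{not} a quasi-isometry invariant of general metric spaces; that is exactly why the paper and \cite{Jesus} restrict the invariance statement to path metric spaces. So the elementary additive-constant argument you propose in place of the path-metric machinery does not work as stated.

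The fix is the tool you explicitly set aside: $k_{G\setminus\overline{D}}$ \emph{is} intrinsic (for any domain, the Kobayashi distance is the integrated form of the Kobayashi metric), $k_G$ is intrinsic and Gromov hyperbolic, and by Proposition \ref{ball} the inclusion $(G\setminus\overline{D},k_{G\setminus\overline{D}})\hookrightarrow(G,k_G)$ is a quasi-isometric embedding; then \cite[Theorems 3.18, 3.20]{Jesus} (quasi-isometric maps between intrinsic spaces with hyperbolic target) give the hyperbolicity of $(G\setminus\overline{D},k_{G\setminus\overline{D}})$. This also sidesteps your legitimate worry that $k_G|_{(G\setminus\overline{D})\times(G\setminus\overline{D})}$ need not be intrinsic: one embeds into all of $(G,k_G)$ rather than passing through the restricted metric space. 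With this replacement the argument is complete and coincides with the paper's, which simply combines the ``in particular'' clause of Proposition \ref{ball} with \cite[Theorem 1.4]{Bonk}.
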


The estimates that we use in the proof of Proposition 5 do not hold for the planar annulus $\mathbb{A}_r=\{z\in\mathbb{C}: r^{-1}<|z|<r\}$ ($r>1$).
However, any finitely connected proper planar domain is Gromov $k$-hyperbolic (cf. \cite[Proposition 3.2]{RT}).

\begin{propozycja}\label{compact}
Let $G$ be a bounded domain in $\Bbb C^n\ (n\geq 2).$ Assume that $K$ is compact subset of $G$ such that
through any point $z\in\Bbb C^n\setminus K$ passes a complex line disjoint from $K.$ Then
$G\setminus D$ is a domain such that $k_{G\setminus D}$ is quasi-isometrically equivalent to $k_{G}|_{(G\setminus D)\times (G\setminus D)}$.\footnote{One
can show that these distances are not bilipschitz equivalent if, for example, $K$ is a closed polydisc.}

In particular, if $G$ is Gromov $k$-hyperbolic, then so is $G\setminus K.$
\end{propozycja}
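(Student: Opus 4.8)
The plan is to prove the quantitative two–sided estimate
$$
k_G(z,w)\ \le\ k_{G\setminus K}(z,w)\ \le\ c_1\,k_G(z,w)+c_2\qquad(z,w\in G\setminus K),
$$
which is exactly the asserted quasi-isometric equivalence of $k_{G\setminus K}$ and $k_{G}|_{(G\setminus K)\times(G\setminus K)}$. The left inequality is just monotonicity of the Kobayashi distance under $G\setminus K\hookrightarrow G$, so all the work is in the right one. First one checks that $G\setminus K$ is a domain: a bounded component of $\Bbb C^n\setminus K$ would contain no point lying on a complex line avoiding $K$ (a line is unbounded, hence must leave the component through $K$), so the hypothesis forces $\Bbb C^n\setminus K$ to be connected, and a transversality/deformation argument pushing a path in $G$ off $K$ along the available lines shows $G\setminus K$ is connected as well. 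Fix open neighbourhoods $U_0\Subset U$ of $K$ with $\overline U\Subset G$.

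The first ingredient is a comparison of the infinitesimal metrics away from $K$: there is $C>0$ with $\kappa_{G\setminus K}(x;X)\le C\,\kappa_G(x;X)$ for all $x\in G\setminus U$ and all $X$. Indeed, any path in $G$ from such an $x$ to a point of $K$ crosses $\partial U$, so $k_G(x,K)\ge\mathrm{dist}_{k_G}(\partial U,K)=:2c>0$ uniformly in $x\notin U$ ($\partial U$ and $K$ being disjoint compacta in the bounded, hence Kobayashi-hyperbolic, domain $G$). Hence if $\varphi\in\mathcal O(\D,G)$ is an almost-extremal disc for $\kappa_G(x;X)$, then $\zeta\mapsto\varphi(\tanh(c)\,\zeta)$ takes values in the $k_G$-ball of radius $c$ about $x$, which misses $K$; rescaling gives the claim with $C=1/\tanh(c)$.

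The second, and main, ingredient is an \emph{escape lemma}: there is $M>0$ such that every $z\in\overline U\setminus K$ can be joined inside $G\setminus K$ to a point of $\partial U$ by a path of $k_{G\setminus K}$-length $\le M$; in particular $D:=\mathrm{diam}_{k_{G\setminus K}}(\overline U\setminus K)<\infty$. For $z\in U\setminus K$ choose, by hypothesis, a complex line $\ell\ni z$ with $\ell\cap K=\varnothing$ and let $V$ be the component of $\ell\cap G$ containing $z$; then $V\subset G\setminus K$, and, $V$ being a component of $\ell\cap G$, its boundary inside $\ell$ lies in $\partial G$. Since a full complex line is unbounded, $V$ is a proper subdomain of $\ell$, so $\partial V\ne\varnothing$; being contained in $\partial G$, which is disjoint from $\overline U$, this boundary forces the connected set $V$ to meet $\partial U$. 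Now $z$ and the points of $V\cap\partial U$ all lie well inside the planar domain $V$ — their Euclidean distance to $\partial V\subset\partial G$ is bounded below by $\mathrm{dist}(\overline U,\partial G)>0$ and by $\mathrm{dist}(\partial U,\partial G)>0$ respectively — and $V$ has diameter $\le\mathrm{diam}(G)$, so one can route from $z$ to a point of $V\cap\partial U$ through $V$ at bounded $k_V$-cost, whence at bounded $k_{G\setminus K}$-cost. \textbf{The hard part — and the main obstacle — is the uniformity of $M$ as $z\to K$:} one must exclude that the slices $V=V_z$ develop a thin neck between $z$ and $\partial U$ that makes $k_{V_z}$ blow up. This is settled by a normal-families/compactness argument over the (non-closed) family of pairs $(z,\ell)$: were the bound to fail, a limiting line through some $p\in K$ would appear which, as a limit of lines disjoint from $K$, meets $K$ only in a set with empty interior in that line; since the points $z_j\to p$ stay deep inside $G$ (because $K\Subset G$) their slices remain fat near $z_j$ and cannot be swallowed by $U$, producing a uniform reroute and a contradiction.

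Finally one assembles the pieces. Given $z,w\in G\setminus K$, take a curve $\gamma$ in $G$ from $z$ to $w$ of $k_G$-length $\le k_G(z,w)+1$. Split $\gamma$ into the arcs lying outside $\overline{U_0}$ and the maximal arcs lying inside $\overline{U_0}$, and merge two consecutive arcs of the second type whenever $\gamma$ does not leave $\overline U$ between them. On the arcs in $G\setminus\overline{U_0}$ the first ingredient bounds the $k_{G\setminus K}$-length by $C$ times the $k_G$-length; each merged excursion has both endpoints in $\overline U\setminus K$ and is replaced by a detour in $G\setminus K$ of $k_{G\setminus K}$-length $\le D+1$; and any two consecutive excursions are separated along $\gamma$ by an arc leaving $\overline U$, hence of $k_G$-length $\ge 2\,\mathrm{dist}_{k_G}(\partial U_0,\partial U)=:2\eta>0$, so their number is $\le(k_G(z,w)+1)/(2\eta)+1$. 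Summing gives
$$
k_{G\setminus K}(z,w)\ \le\ \Big(C+\tfrac{D+1}{2\eta}\Big)\big(k_G(z,w)+1\big)+(D+1),
$$
the desired inequality. The "in particular'' then follows: $k_{G\setminus K}$ is intrinsic, the identity is a bijective quasi-isometry onto $(G\setminus K,k_G|)$, and Gromov hyperbolicity is preserved under bijective quasi-isometries of path metric spaces.
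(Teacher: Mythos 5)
Your overall architecture --- monotonicity for the lower bound, an infinitesimal localization of $\kappa_{G\setminus K}$ by $\kappa_G$ away from a neighbourhood $U$ of $K$, a bounded-cost escape from $\overline U\setminus K$ to $\partial U$, and a gluing of the two regimes --- is essentially the paper's, and the localization and gluing steps are sound. The genuine gap is exactly where you flag it: the escape lemma. Having chosen a line $\ell\ni z$ with $\ell\cap K=\varnothing$ and the component $V$ of $\ell\cap G$ containing $z$, you assert that since $z$ and the points of $V\cap\partial U$ lie at a definite Euclidean distance from $\partial V$ and $V$ has bounded diameter, they can be joined at bounded $k_V$-cost. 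That implication is false for general planar domains (two deep points of a dumbbell separated by a thin neck are far apart in the hyperbolic metric); you acknowledge this, but the ``normal families over pairs $(z,\ell)$'' argument offered in its place is not a proof: the family of admissible lines is not closed, a limit line may meet $K$, and nothing in the sketch converts ``the slices remain fat near $z_j$'' into a bound on $k_{V_{z_j}}(z_j,\partial U)$ --- it restates the conclusion to be proved. As written, the central estimate of the argument is unestablished.

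The fix is short, makes any limiting argument unnecessary, and is what the paper does: do not route through all of $V$, only through the round disc $\Delta=\Delta(z,\rho)\subset\ell$ with $\rho=d_{G\cap\ell}(z)$. Since $\ell\cap K=\varnothing$ we have $\Delta\subset G\setminus K$, and $\rho\le\tfrac12\operatorname{diam}G$. The closed disc touches $\partial(G\cap\ell)\subset\partial G$ at some $p$, so the radius $[z,p)$ leaves $\overline U$ and crosses $\partial U$ at a point $z'$ with $\|z-z'\|=\rho-\|z'-p\|\le\rho-r$, where $r=\operatorname{dist}(\partial U,\partial G)>0$. Hence
$$
\tanh l_{G\setminus K}(z,z')\ \le\ \tanh l_{\Delta}(z,z')\ =\ \frac{\|z-z'\|}{\rho}\ \le\ 1-\frac{r}{\rho}\ \le\ 1-\frac{2r}{\operatorname{diam}G},
$$
a bound independent of $z$ and $\ell$; combined with $\max k_{G\setminus K}|_{\partial U\times\partial U}<\infty$ (a continuous function on a compact subset of $G\setminus K$) this yields the uniform constant $M$ you need, with no thin-neck issue. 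With this repair your proof closes and coincides in substance with the paper's, up to a slightly different (but equally valid) gluing of the two regimes.
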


Note that we may take $K$ to be any compact ($\C$-)convex set, since
any compact or open $\C$-convex set $E$ in $\Bbb C^n$ is \emph{linearly convex}, i.e.
through any point in $\Bbb C^n\setminus E$ passes a complex line disjoint from $E$ (cf. \cite[Theorem 2.3.9]{APS}).

Throughout the paper $d_{D}$ denotes the (Euclidean) distance to $\partial D.$ A point $z\in\mathbb{C}^{n}$ we write as $(z_{1},\ldots,z_{n}),\ z_{j}\in\mathbb{C}.$

An appendix at the end of the paper includes some of the estimates for the Kobayashi distance and metric used in the proofs.

\section{Proofs}

\noindent{\it Proof of Proposition \ref{I}.}
Assume that $(X,d)$ is $\frac{\delta}{2}$-hyperbolic. Put $k=3+\delta$.
Then there are points $y_1,y_2\in X_2$ such that $d_2(y_1,y_2)=2s\geq 2k$.
Choose points $x_1,x_2^\ast\in X_1$ with $d_1(x_1,x_2^\ast)\geq 2s$.
By the path property of $X_1$, there is a $d_1$-continuous curve
$\gamma:[0,1]\to X_1$ joining the points $x_1$ and $x_2^\ast$
such that
$L_{d_1}(\gamma)<d_1(x_1,x_2^\ast)+1$. Note that $t\to
d_1(x_1,\gamma(t))$ is continuous.
Hence there is a smallest number $t_0$ such that $d_1(x_1,\gamma(t_0))=2s$.
Set $x_2=\gamma(t_0)$.

Now $L(\gamma|_{[0,t_0]}) \ge d_1(x_1,x_2)=2s$, and
$$
L(\gamma|_{[0,t_0]}) = L(\gamma) - L(\gamma|_{[t_0,1]})
\le d_1(x_1,x_2^\ast)+1 - d_1(x_2,x_2^\ast) \le d_1(x_1,x_2) +1.
$$
Let $t_1$ be the smallest number in $[0,t_0]$ such that
$d_1(x_1,\gamma(t_1))=s$. Set $x_3=\gamma(t_1)$. Then
$$
d_1(x_2,x_3) \ge d_1(x_1,x_2) - d_1(x_1,x_3) = s, \mbox{ and }
$$
$$
d_1(x_2,x_3) = L(\gamma|_{[0,t_1]}) = L(\gamma|_{[0,t_0]}) - L(\gamma|_{[t_1,t_0]})
\le 2s+1 - d_1(x_1,x_2) = s+1.
$$
Hence, $s= d_1(x_1,x_3)\leq
d_1(x_3,x_2)<s+1$.

Now define the following points in $X_1 \times X_2$:
$x=(x_1,y_1)$, $y=(x_2,y_1)$, $w=(x_3,y_1)$, and
$z=(x_3,y_2)$.
Then $d(z,w)=d(z,x)=d(z,y)=2s$ and $(x,y)_w\leq 1$,
$(x,z)_w= d(x,w)=s$, $(y,z)_w=d(y,w)\ge s-1$.
By the assumption of
$\frac{\delta}{2}$-hyperbolicity we reach the inequalities
$$
1\geq (x,y)_w\geq \min\{(y,z)_w,(x,z)_w\}-\delta\geq
s-1-\delta\ge 2
$$
which is a contradiction.

\begin{uwaga}
An essential ingredient in the proof of Proposition \ref{I} is the existence of points $x_{1},x_{2},x_{3}$ 
such the triangle inequality is a near-equality, namely
$(x_1, x_2)_{x_3} \le 1$.  The condition \eqref{wmp} is equivalent to
$(x_1, x_2)_{x_3} = o(d(x_1,x_2))$, and
$\left| d(x_1,x_3) - d(x_2,x_3)\right| = o(d(x_1,x_2))$.

Using this weaker hypothesis and following the steps of the above proof,
setting $2s =d(x_1,x_2)$ as before, we find
$$
o(s)\geq (x,y)_w\geq s - o(s) -\delta,
$$
leading to a contradiction when $s\to\infty$. Similar changes can be made in the proof below.
\end{uwaga}

\noindent{\it Proof of Proposition \ref{II}.}
Let first $(X_1,d_1)$ be $2\delta$-hyperbolic and $d_2\le 2c.$ Since $d\le d_1+2c,$ it follows that
$$(x_1,y_1)_{w_1}-2c\le (x,y)_w\le(x_1,y_1)_{w_1}+4c$$
and then $(X,d)$ is $(\delta+3c)$-hyperbolic.

Assume now that $(X,d)$ is $\delta$-hyperbolic. Following the proof of Pro\-position \ref{I}, we deduce that
one of the distances is bounded, say $d_2\le 2c.$ Then we get as above that $(X_1,d_1)$ is $(\delta+3c)$-hyperbolic.
\medskip

\noindent{\it Proof of Proposition \ref{G_n}.}
Let $a\in\Bbb D,$ $p_a=\pi (a,\ldots,a),$ $q_a=\pi (a,\ldots,a,-a)$ and $m_a=\pi (a,\ldots,a,0).$ We shall show that
$$S_{c_{\Bbb G_n}}(p_a,q_a,m_a,0)\to\infty\mbox{ as }|a|\to 1.$$
It follows exactly in the same way that $S_{k_{\Bbb G_n}}(p_a,q_a,m_a,0)\to\infty\mbox{ as }|a|\to 1.$
So, $\Bbb G_n$ is not Gromov $c$- nor $k$-hyperbolic for $n\geq 2.$

The holomorphic contractibility implies that
$$c_{\Bbb G_n}(p_a,q_a) \ge c_{\Bbb D}(a^n,-a^n)=2c_{\Bbb D}(a^n,0),$$
$$
\max\{c_{\Bbb G_n}(p_a,0), c_{\Bbb G_n}(q_a,0),c_{\Bbb G_n}(p_a,m_a),c_{\Bbb G_n}(q_a,m_a)\}\le c_{\Bbb D}(a^n,0).$$
Thus
$$S_{c_{\Bbb G_n}}(p_a,q_a,m_a,0)\ge c_{\Bbb G_n}(m_a,0)+2c_{\Bbb D}(a^n,0)-2c_{\Bbb D}(a,0).$$
Since
$$2c_{\Bbb D}(a,0)-2c_{\Bbb D}(a,0)\to\log n\mbox{ as }|a|\to 1,$$
it remains to see that $c_{\Bbb G_n}(m_a,0)\to\infty$ as $|a|\to 1.$ This follows by the fact that any
point $b\in\Bbb G_n$ is a weak peak point, i.e. there exists $f_b\in\mathcal O(\Bbb G_n,\Bbb D)$ such that
$|f_b(z)|\to 1$ as $z\to b$ (a consequence of \cite[Corollary 3.2]{Costara}).
\medskip

\noindent{\it Proof of Proposition \ref{tetrablock}.}
Let $a\in (0,1),$ and put $P_a=\varphi(\textup{diag}(a,a)),\ Q_a=\varphi(\textup{diag}(a,-a)).$
Recall that
$\Phi_a(Z)=(Z-a\textup{I})(\textup{I}-aZ)^{-1}$ is an automorphism of $\mathcal{R}_{II}.$ Direct computations show that
$$\varphi\circ\Phi_a  (\left( \begin{array}{ll}
                 z_{11}   &  z \\
                 z   &  z_{22}
\end{array} \right))=\varphi\circ\Phi_a  (\left( \begin{array}{ll}
                 z_{11}   &  -z \\
                 -z   &  z_{22}
\end{array} \right)),$$
whenever $\left( \begin{array}{ll}
                 z_{11}   &  z \\
                 z   &  z_{22}
\end{array} \right)\in\mathcal{R}_{II}.$ Thus, $\Phi_a$ induces an automorphism $\widetilde{\Phi}_{a}$ of $\mathbb{E}.$
It follows from this and \cite[Corollary 3.7]{AWY} that
\begin{multline*}
k_{\mathbb{E}}(0,(a,b,p))=\tanh^{-1}\max\Big{\{}\frac{|a-\overline{b}p|+|ab-p|}{1-|b|^{2}},\frac{|b-\overline{a}p|+|ab-p|}{1-|a|^{2}}\Big{\},}\\
(a,b,p)\in\mathbb{E},
\end{multline*}
$$
2k_{\mathbb{E}}(P_a,0),\ 2k_{\mathbb{E}}(Q_a,0),\ k_{\mathbb{E}}(P_a,Q_a)
=-\log d_{\mathbb{D}}(a) +\textup{O}(1).
$$
Observe that if $f(\lambda)=(0,\lambda,0),$, then $g_a=\widetilde{\Phi}_{-a}\circ f$ is a complex geodesic for $k_{\Bbb E}$
with $P_a=g_a(0)$, $Q_a=g\left(-\frac{2a}{1+a^2}\right)$.
Note that the Kobayashi middle point $R_{a}$ of $g_{a}|_{[-\frac{2a}{1+a^2},0]}$ tends to the boundary; more precisely,
$$R_{a}=g_{a}(-a)\rightarrow \textup{diag}(1,0) \textup{ as }a\rightarrow 1.$$
Consequently, $S_{k_{\Bbb E}}(P_a,Q_a,R_a,0)$ is comparable with $k_{\mathbb{E}}(R_a,0).$
By Pro\-position A1(b) (see Appendix), $k_{\mathbb{E}}(R_a,0)\to\infty$ as $a\to 1,$
which finishes the proof.
\medskip

\noindent{\it Proof of Theorem \ref{Gaussier}.} Since $\partial D$ contains an analytic disc, it is well known that it contains an affine disc (cf. \cite[Proposition 7]{NPZ}). We assume that this disc has center $0$ and lies in $\{z_{1}=0\},$ and that $D\subset\{\Re z_{1}>0\}.$

\begin{lemma}
We can find an $r>0$ such that for any $\delta >0$ small enough
there exist two discs
$\Delta (\tilde{p}_{\delta},r)$ and $\Delta (\tilde{q}_{\delta},r)$
in $D_{\delta}=D\cap\{z_{1}=\delta\}$ which touch $\partial{D}$ at two points $\hat{p}_{\delta}$ and $\hat{q}_{\delta}$ with $\lVert \hat{p}_{\delta}-\hat{q}_{\delta}\rVert >5r.$
\end{lemma}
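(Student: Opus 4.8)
The plan is to exploit the geometry at a boundary point of the analytic disc in $\partial D$, together with $\mathcal{C}^{1,1}$-smoothness, to produce the two interior discs. Since the affine disc sits in $\{z_1 = 0\}$ and $D \subset \{\Re z_1 > 0\}$, for small $\delta > 0$ the slice $D_\delta = D \cap \{z_1 = \delta\}$ is a nonempty convex open subset of the affine line $\{z_1 = \delta\} \cong \mathbb{C}$ whose closure, as $\delta \to 0$, converges (in Hausdorff distance) to the closure of the original affine disc $\Delta(0, r_0) \subset \{z_1 = 0\}$ for some $r_0 > 0$. So first I would fix $r := r_0/8$, say, and pick two points $\hat p_0, \hat q_0$ on the boundary circle $\{|z_2| = r_0\} \subset \partial D \cap \{z_1 = 0\}$ with $\|\hat p_0 - \hat q_0\| > 6r$; these are genuine boundary points of $D$.

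The key step is to push these two boundary points slightly into the $z_1 = \delta$ slice and find round discs of fixed radius $r$ inside $D_\delta$ tangent to $\partial D$ near them. Here is where $\mathcal{C}^{1,1}$-smoothness enters: a $\mathcal{C}^{1,1}$ domain satisfies a uniform interior ball condition, i.e. there is $\rho > 0$ such that every boundary point admits a Euclidean ball of radius $\rho$ contained in $D$ and tangent at that point. I would apply this at $\hat p_0$ (and $\hat q_0$): the interior ball $B(c_p, \rho) \subset D$ tangent at $\hat p_0$. Intersecting this ball with $\{z_1 = \delta\}$ for $\delta$ small gives a disc in $D_\delta$ whose radius tends to $\rho$ (more precisely is $\geq \sqrt{\rho^2 - (\text{something} \cdot \delta)^2}$, hence $> r$ once we also take $r < \rho/2$) — the center of this disc, $\tilde p_\delta$, lies over a point near $\hat p_0$, and the boundary of this disc touches $\partial D$ at a point $\hat p_\delta$ close to $\hat p_0$ (within $O(\sqrt\delta)$, or even $O(\delta)$ by convexity and the $\mathcal{C}^{1,1}$ estimate). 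Do the same at $\hat q_0$ to get $\Delta(\tilde q_\delta, r) \subset D_\delta$ touching $\partial D$ at $\hat q_\delta$ near $\hat q_0$.

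Finally I would check the separation $\|\hat p_\delta - \hat q_\delta\| > 5r$: since $\hat p_\delta \to \hat p_0$ and $\hat q_\delta \to \hat q_0$ as $\delta \to 0$ and $\|\hat p_0 - \hat q_0\| > 6r$, the inequality $\|\hat p_\delta - \hat q_\delta\| > 5r$ holds for all $\delta$ small enough, by continuity. One subtlety to address carefully is that the disc $\Delta(c_p,\rho) \cap \{z_1 = \delta\}$ need not be tangent to $\partial D$ from the inside of $D_\delta$ a priori — but shrinking the radius from its actual value down to the fixed value $r < \rho/2$ and recentering appropriately, one obtains a disc of radius exactly $r$ inside $D_\delta$ that can be translated until it first touches $\partial D_\delta = \partial D \cap \{z_1=\delta\}$; convexity of $D_\delta$ guarantees such a first-contact point $\hat p_\delta$ exists and stays near $\hat p_0$. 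The main obstacle is this last bookkeeping: controlling that the touching point of the radius-$r$ disc (rather than the radius-$\rho$ ball) remains within a small neighborhood of $\hat p_0$, uniformly as $\delta \to 0$; this is where the interior ball condition from $\mathcal{C}^{1,1}$-regularity, as opposed to mere smoothness or convexity, is really used.
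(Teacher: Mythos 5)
Your central step does not work as stated. Every point of the flat piece $\partial D\cap\{z_{1}=0\}$ has $\{\Re z_{1}=0\}$ as its tangent hyperplane (this plane supports $D$ there, and by $\mathcal{C}^{1}$-smoothness the supporting plane is the tangent plane), so the interior tangent ball $B(c_{p},\rho)$ at $\hat p_{0}$ has center $c_{p}=\hat p_{0}+\rho\,(1,0)$. Its intersection with $\{z_{1}=\delta\}$ is then a disc of radius $\sqrt{\rho^{2}-(\rho-\delta)^{2}}=\sqrt{2\rho\delta-\delta^{2}}$, which tends to $0$ as $\delta\to 0$, not to $\rho$; your claimed lower bound $\sqrt{\rho^{2}-(C\delta)^{2}}$ would require the center of the ball to sit at height $O(\delta)$ above the slice, which is impossible for a ball contained in $D\subset\{\Re z_{1}>0\}$ and tangent at a point of $\{z_{1}=0\}$. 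Consequently no disc of a fixed radius $r$ is produced by slicing, the fallback of ``shrinking the radius down to $r$'' is vacuous (the slice is already smaller than $r$), and the control of the contact points $\hat p_{\delta},\hat q_{\delta}$ — hence the separation $\lVert\hat p_{\delta}-\hat q_{\delta}\rVert>5r$ — collapses. A secondary problem is the choice of $\hat p_{0},\hat q_{0}$ on the circle $\{|z_{2}|=r_{0}\}$: the convex set $\partial D\cap\{z_{1}=0\}$ may strictly contain the given affine disc, so these points can lie in the relative interior of the flat piece, where there is no reason for a disc in $D_{\delta}$ tangent to $\partial D$ to exist nearby.

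The paper avoids both issues by working entirely inside the two-dimensional convex set $D_{0}=\operatorname{int}\bigl(\partial D\cap\{z_{1}=0\}\bigr)$: it takes the incenter $\zeta_{0}$ of $D_{0}$, observes that the set of nearest boundary points cannot lie in an open half-plane through $\zeta_{0}$ (otherwise one could move $\zeta_{0}$ to increase the inradius), extracts two nearest points $\hat p,\hat q$ with angular separation at least $2\pi/3$, and places discs of a suitably small fixed radius $r$ inside $D_{0}$ tangent at $\hat p$ and $\hat q$; the separation $>5r$ then comes from the angle and the choice of $r$. Only afterwards does $\mathcal{C}^{1,1}$-smoothness enter, to guarantee that the translates of these planar discs by $(\delta,0)$ remain in $D$, after which they are slid within the slice $D_{\delta}$ until they touch $\partial D_{\delta}$. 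If you want to salvage your argument, you would need to replace the interior-ball slicing by some such transversal translation argument; as written, the proof has a genuine gap.
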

\begin{proof}
We identify $\partial D\cap \{ z_{1}=0\}$ with a closed, bounded, convex subset of $\mathbb C$,
which is the closure of its interior. Call this interior $D_0.$

There exists $\zeta_0 \in D_0$ such that
$d_{D_0}(\zeta_0)=\max_{\zeta\in D_0}d_{D_0}(\zeta).$ Then
$$
M= \left\{ p \in \partial D_0 : |p- \zeta_0 | = \min_{\xi \in \partial D_0} |\xi- \zeta_0 |
\right\}
$$
is a not empty set which cannot be contained in any half plane
$$H_\theta = \{ \zeta :\\ \Re [(\zeta-\zeta_0)e^{-i\theta}] <0\}:$$ if it were,
one could find $\varepsilon>0$ such that
$d_{D_0}( \zeta_0 + \varepsilon e^{i\theta})
> d_{D_0}(\zeta_0)$. So there are $\hat p \neq \hat q
\in M$ such that $\arg( (\hat p -\zeta_0) (\hat q-\zeta_0)^{-1} ) \ge 2\pi/3$.
Take $r\in (0, \frac{\sqrt 3}{5+\sqrt 3}|\hat p -\zeta_0|)$,
$\tilde{p} = \zeta_0 + ( 1 - r|\hat p -\zeta_0|^{-1} )  (\hat p -\zeta_0)$,
and $\tilde{q}$ chosen likewise. Then the discs
$\Delta (\tilde{p},r)\subset D_{0}$ and $\Delta (\tilde{q},r)\subset D_{0}$
are tangent to $\partial D_{0}$ at $\hat{p}$ and $\hat{q}$.

Now we want to move these discs inside $D.$ By $\mathcal{C}^{1,1}$-smoothness of $D,$ we can move them (in $\mathbb{C}^{2}$) along
the vector $(1,0)$ inside $D,$ i.e. $\Delta (\tilde{p},r),\,\Delta (\tilde{q},r)\subset D\cap\{z_{1}=\delta\}=D_{\delta},$ for $0<\delta <\delta_{0}.$ If they do not touch $\partial D_{\delta},$ then shift them (separately at every sublevel set) to the boundary but leaving their centers on the real line passing through $\tilde{p}+(\delta,0)$ and $\tilde{q}+(\delta,0).$ Denote new discs by $\Delta (\tilde{p}_{\delta},r),\,\Delta (\tilde{q}_{\delta},r),$ and by $\hat{p}_{\delta},\,\hat{q}_{\delta}$ points of contact of those discs with $\partial D_{\delta}.$
\end{proof}

Choose now a point $a=(\delta_{0},0)\in D\textup{ (}\delta_{0}>0\textup{)}$ and consider the cone with vertex at $a$ and base $\partial D\cap\{z_{1}=0\}.$ Denote by $G_{\delta}$ the intersection of this cone and $\{z_{1}=\delta\}.$ For any $\delta>0$ small enough the line segment with ends at $\tilde{p}_{\delta}$ and $\hat{p}_{\delta}$ intersects $\partial G_{\delta},$ say at $p_{\delta}.$ Define $q_{\delta}$ in a similar way.

Set $\tilde{s}_{\delta}=\frac{ \tilde{p}_{\delta}+\tilde{q}_{\delta}}{2}.$ We shall show that $S_{k_D}(p_{\delta},q_{\delta}, \tilde{s}_{\delta}, a) \rightarrow \infty$ as $\delta\rightarrow 0.$
For this we will see that $(p_{\delta},\tilde{s}_{\delta})_{a}-(p_{\delta},q_{\delta})_{a}\rightarrow \infty$ as $\delta\rightarrow 0.$ It will follow in the same way that $(q_{\delta},\tilde{s}_{\delta})_{a}-(p_{\delta},q_{\delta})_{a}\rightarrow \infty.$

It is enough to prove that
\begin{equation}\label{1}
k_{D}(q_{\delta},a)-k_{D}(\tilde{s}_{\delta},a)<c_{1}
\end{equation}
and
\begin{equation}\label{2}
k_{D}(p_{\delta},q_{\delta})-k_{D}(p_{\delta},\tilde{s}_{\delta})\rightarrow \infty.
\end{equation}
Here and below $c_{1},c_{2},\ldots$ denote some positive constants  which are independent of $\delta.$

For (\ref{1}), observe that, by  Propositions A1(a) and A2 (see Appendix),
\begin{equation}\label{Kobayashi estimates}
k_{D}(\tilde{s}_{\delta},a)\geq\frac{1}{2}\log\frac{d_{D}(a)}{d_{D}(\tilde{s}_{\delta})}\textup{ and }2k_{D}(q_{\delta},a)\leq -\log d_{D}(q_{\delta})+c_{2}.
\end{equation}
It remains to use that $d_{D}(\tilde{s}_{\delta})=d_{D}(q_{\delta})$ for any $\delta >0$ small enough.

To prove (\ref{2}), denote by $F_{\delta}$ the convex hull of
$\Delta (\tilde{p}_{\delta},r)$ and $\Delta (\tilde{s}_{\delta},r).$ Then
by inclusion
$k_{D}(p_{\delta},\tilde{s}_{\delta})\leq k_{F_{\delta}}(p_{\delta},\tilde{s}_{\delta})$.

\begin{lemma}
$k_{F_{\delta}}(p_{\delta},\tilde{s}_{\delta}) < -\frac{1}{2}\log d^{\prime}_{D}(p_{\delta})+c_{3},$
where $d^{\prime}_{D}$ is the distance to $\partial D$ in the $z_{2}$-direction.
\end{lemma}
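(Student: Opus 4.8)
The plan is to exploit that, in the coordinate $z_{2}$ on the complex line $\{z_{1}=\delta\}$, the domain $F_{\delta}$ is a \emph{bounded planar convex domain} — the convex hull of two discs of the fixed radius $r$ centred at $\tilde p_{\delta}$ and $\tilde s_{\delta}$ — and to estimate $k_{F_{\delta}}(p_{\delta},\tilde s_{\delta})$ by concatenating a path from $p_{\delta}$ to $\tilde p_{\delta}$ inside the cap $\Delta(\tilde p_{\delta},r)$ with the straight segment from $\tilde p_{\delta}$ to $\tilde s_{\delta}$. By the triangle inequality it then suffices to bound the two terms in
$$
k_{F_{\delta}}(p_{\delta},\tilde s_{\delta})\le k_{F_{\delta}}(p_{\delta},\tilde p_{\delta})+k_{F_{\delta}}(\tilde p_{\delta},\tilde s_{\delta}).
$$

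For the second term, a direct convexity check shows that $\Delta(z,r)\subset F_{\delta}$ for every $z$ on the segment $[\tilde p_{\delta},\tilde s_{\delta}]$, so $d_{F_{\delta}}\ge r$ there. As $F_{\delta}$ is planar, feeding the affine disc $\lambda\mapsto z+\lambda\,d_{F_{\delta}}(z)X/|X|$ into the definition of $\kappa_{F_{\delta}}$ gives $\kappa_{F_{\delta}}(z;X)\le |X|/d_{F_{\delta}}(z)$; integrating this along $[\tilde p_{\delta},\tilde s_{\delta}]$ yields $k_{F_{\delta}}(\tilde p_{\delta},\tilde s_{\delta})\le |\tilde p_{\delta}-\tilde s_{\delta}|/r$. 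Since the centres vary continuously with $\delta$ and converge, as $\delta\to 0$, to points of the bounded set $\overline{D_{0}}$, this quantity stays bounded for small $\delta$, so the second term is $\mathrm{O}(1)$ uniformly.

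For the first term, $\Delta(\tilde p_{\delta},r)\subset F_{\delta}$ gives $k_{F_{\delta}}(p_{\delta},\tilde p_{\delta})\le k_{\Delta(\tilde p_{\delta},r)}(p_{\delta},\tilde p_{\delta})=\tanh^{-1}(|p_{\delta}-\tilde p_{\delta}|/r)$. Writing $t_{\delta}:=r-|p_{\delta}-\tilde p_{\delta}|$ for the distance from $p_{\delta}$ to the circle $\partial\Delta(\tilde p_{\delta},r)$, the right-hand side equals $\tfrac12\log\frac{r+|p_{\delta}-\tilde p_{\delta}|}{t_{\delta}}\le -\tfrac12\log t_{\delta}+\tfrac12\log 2r$. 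Here the only genuinely geometric input enters: $p_{\delta}$, $\tilde p_{\delta}$ and the contact point $\hat p_{\delta}$ are collinear with $|\hat p_{\delta}-\tilde p_{\delta}|=r$, whence $t_{\delta}=|p_{\delta}-\hat p_{\delta}|$; and $\hat p_{\delta}\in\partial D$ lies in the line $\{z_{1}=\delta\}$ through $p_{\delta}$, i.e.\ it is reached from $p_{\delta}$ by a displacement in the $z_{2}$-direction. Hence $d'_{D}(p_{\delta})\le |p_{\delta}-\hat p_{\delta}|=t_{\delta}$, so that $-\tfrac12\log t_{\delta}\le -\tfrac12\log d'_{D}(p_{\delta})$.

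Adding the two bounds gives $k_{F_{\delta}}(p_{\delta},\tilde s_{\delta})<-\tfrac12\log d'_{D}(p_{\delta})+c_{3}$ with $c_{3}=\tfrac12\log 2r+\sup_{\delta}|\tilde p_{\delta}-\tilde s_{\delta}|/r$ (supremum over all small $\delta$), a constant independent of $\delta$; the strict inequality comes from the proper inclusion $\Delta(\tilde p_{\delta},r)\subsetneq F_{\delta}$, or, at worst, by enlarging $c_{3}$ slightly. I do not expect a real obstacle: the argument is just a two-term comparison with a disc and a flat strip. The only point that needs attention is the uniformity of $c_{3}$ in $\delta$, i.e.\ that $F_{\delta}$ has uniformly bounded geometry — but this is immediate from the construction in the preceding lemma, where a fixed pair of radius-$r$ discs is transported continuously with $\delta$ inside a fixed bounded region.
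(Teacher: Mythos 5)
Your proposal is correct and follows essentially the same route as the paper: the same triangle-inequality split through $\tilde p_{\delta}$, the same Poincar\'e-distance computation in the disc $\Delta(\tilde p_{\delta},r)$ giving the $-\frac12\log(\cdot)+\frac12\log 2r$ bound, and an $\mathrm{O}(|\tilde p_{\delta}-\tilde s_{\delta}|/r)$ bound for the flat part (the paper uses a finite chain of radius-$r$ discs where you integrate $\kappa_{F_\delta}(z;X)\le |X|/d_{F_\delta}(z)$, which is equivalent). The only cosmetic difference is that you use the one-sided inequality $d'_{D}(p_{\delta})\le |p_{\delta}-\hat p_{\delta}|$, whereas the paper records the equality $d'_{D}(p_{\delta})=d_{\Delta(\tilde p_{\delta},r)}(p_{\delta})$; your direction is the one actually needed here.
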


{\it Proof.} For $\delta>0$ small enough we have that
$$
d^{\prime}_{D}(p_{\delta}) = d_{D_\delta}(p_{\delta}) =
d_{F_\delta}(p_{\delta}) = d_{\Delta (\tilde{p}_{\delta},r)}(p_{\delta})
$$
because the closest point on $\partial D_\delta$ belongs to $\partial \Delta (\tilde{p}_{\delta},r)$.
Now $k_{F_{\delta}}(p_{\delta},\tilde{s}_{\delta}) \le
k_{F_{\delta}}(p_{\delta},\tilde{p}_{\delta}) + k_{F_{\delta}}(\tilde p_{\delta},\tilde{s}_{\delta}) $.

Since $\Delta (\tilde{p}_{\delta},r)\subset F_{\delta}$,
\begin{multline*}
k_{F_{\delta}}(p_{\delta},\tilde{p}_{\delta})
\le k_{\Delta (\tilde{p}_{\delta},r)}(p_{\delta},\tilde{p}_{\delta})
= \frac{1}{2}\log \frac{1+\frac{|p_{\delta}-\tilde{p}_{\delta}|}{r} }{1-\frac{|p_{\delta}-\tilde{p}_{\delta}|}{r}}
\\
\le - \frac{1}{2} \log d_{\Delta (\tilde{p}_{\delta},r)}(p_{\delta}) +\frac12 \log (2r)
= -\frac{1}{2}\log d^{\prime}_{D}(p_{\delta})+ \frac12 \log (2r) .
\end{multline*}
On the other hand, by using a finite chain of discs of radius $r$ with
centers on the line segment from $\tilde{p}_{\delta}$ to $\tilde{s}_{\delta}$, we obtain that
$$k_{F_{\delta}}(\tilde p_{\delta},\tilde{s}_{\delta}) \le
4 \frac{|\tilde p_{\delta}-\tilde{s}_{\delta}|}{r} \le C(r).\qed$$
\smallskip

Now, we shall show that
\begin{equation}\label{3}
2k_{D}(p_{\delta},q_{\delta})>-\log d^{\prime}_{D}(p_{\delta})-\log d^{\prime}_{D}(q_{\delta})-c_{4},
\end{equation}
which implies (\ref{2}), because $d^{\prime}_{D}(q_{\delta})\rightarrow 0$ as $\delta\rightarrow 0.$

Since the Kobayashi distance is intrinsic,
we may find a point $m_{\delta}\in D$ such that
$$\lVert p_{\delta} - m_{\delta}\rVert=\lVert q_{\delta}-m_{\delta}\rVert\geq\frac{\lVert p_{\delta}-q_{\delta}\rVert}{2},$$
$$
k_{D}(p_{\delta},q_{\delta})>
k_{D}(p_{\delta},m_{\delta})+k_{D}(m_{\delta},q_{\delta})-1.
$$

Let $\check{p}_{\delta}\in\partial D$ be the closest point
to $p_{\delta}$ in the direction of the complex line through $p_{\delta}$ and $m_{\delta}.$

Recall that $d_D'$ is the distance to $\partial D$ in the $z_{2}$-direction and $d_D(p_\delta)$ is attained in $z_{1}$-direction
for any $\delta>0$ small enough.
This means that the standard basis is adapted to the local geometry of $\partial D$ near $p_\delta$,
and more precisely, if $X=(X_1,X_2)\in\Bbb C^2$ is a unit vector,
\cite[(4)]{NPZ} states in this case that there exists a constant $C$ such that
\begin{equation*}
\frac{1}{d_D(p_\delta,X)}\le \frac{|X_1|}{d_{D}(p_\delta)} +
\frac{|X_2|}{d'_{D}(p_\delta)}\le\frac{C}{d_D(p_\delta,X)},
\end{equation*}
where $d_D(\cdot;X)$ is the distance to $\partial D$ in direction $X.$
Since $d'_{D}\ge d_D$, we obtain
$$d_D(p_\delta;X)\le c_5d_D'(p_\delta).$$

Let $X= \frac{m_\delta-p_{\delta}}{\|m_\delta-p_{\delta}\|}.$ Then
$\| p_{\delta}-\check{p}_{\delta}\| = d_X (p_{\delta})$ and thus
\begin{equation}\label{4}
\lVert p_{\delta}-\check{p}_{\delta}\rVert< c_{5}d^{\prime}_{D}(p_{\delta}).
\end{equation}

By convexity, $D$ is on the one of the sides, say $H_{\delta},$ of the real tangent plane to $\partial D$ at $\check{p} _{\delta}.$
Since $\frac{\lVert m_{\delta}-\check{p}_{\delta}\rVert}{d_{H_\delta}(m_{\delta})}
=\frac{\lVert p_{\delta}-\check{p}_{\delta}\rVert}{d_{H_\delta}(p_{\delta})}$, it follows by
\eqref{Kobayashi estimates} that
\begin{equation}
2k_{D}(p_{\delta},m_{\delta})
\geq 2k_{H_{\delta}}(p_{\delta},m_{\delta})
\geq \log \frac{d_{H_\delta}(m_{\delta})}{d_{H_\delta}(p_{\delta})}
=
\log\frac{\lVert m_{\delta}-\check{p}_{\delta}\rVert}{\lVert p_{\delta}-\check{p}_{\delta}\rVert}.
\end{equation}
Applying the triangle inequality and \eqref{4}, we get that
\begin{multline*}
\log\frac{\lVert m_{\delta}-\check{p}_{\delta}\rVert}{\lVert p_{\delta}-\check{p}_{\delta}\rVert}\ge
\log\frac{\lVert m_\delta -p_\delta\rVert -\lVert p_\delta -\check{p}_\delta\rVert}{\lVert p_{\delta}-\check{p}_{\delta}\rVert}\ge\\
\log\left(\frac{r}{2\lVert p_{\delta}-\check{p}_{\delta}\rVert}-1\right)\ge\log\frac{r}{2c_5 d^{\prime}(p_\delta)}\,-\,1,
\end{multline*}
for any $\delta >0$ small enough.
So $2k_{D}(p_{\delta},m_{\delta})>-\log d^{\prime}_{D}(p_{\delta})-c_{6}.$ Similarly, $2k_{D}(q_{\delta},m_{\delta})>-\log d^{\prime}_{D}(q_{\delta})-c_{6},$ which implies (\ref{3}), and completes the proof.

\begin{uwaga}
All the above arguments hold in $\mathbb{C}^{n}$, $n\ge 3$, except (\ref{4}).
\end{uwaga}

\noindent{\it Proof of Theorem \ref{Pascal}.} Since the case when $\psi(z_{0}) =0$ for some $z_{0}\not= 0,$ is covered by Proposition \ref{Gaussier}, we may assume that $\psi^{-1}\{0\}=\{0\}$. Also assume $p=(1,0)\in D.$

Let $\alpha (x),$ small enough, an increasing function such that for any $x>0,\ \psi^{\prime}(x)\geq\psi^{\prime}((1-\alpha (x))x)\geq\frac{1}{2}\psi' (x)$.
We choose, for $x>0,\ q(x)=(\psi(x),0),\ r(x)=(\psi (x),-(1-\alpha (x))x),\ s(x)=(\psi (x),(1-\alpha (x))x).$

We claim that for $x$ small enough:
\begin{enumerate}
\item $d_{D}(q)=\psi (x),$\label{A}
\item $\frac{\alpha (x)}{4}x\psi^{\prime}(x)\leq d_{D}(s),d_{D}(r)\leq \alpha (x)x\psi^{\prime}(x),$\label{B}
\item the functions $k_{D}(s,q)+\frac{1}{2}\log\alpha (x)$ and $k_{D}(r,q)+\frac{1}{2}\log\alpha (x)$ are bounded,\label{C}
\item the function $k_{D}(r,s)+\log\alpha (x)$ is bounded.\label{D}
\end{enumerate}

Before we proceed to prove the claims we make some general observation about infinite order of vanishing.
\begin{lemma}
\label{psipsi}
For any  $\varepsilon>0$ and $A>0$, there exists $x \in (0,\varepsilon)$
such that $\frac{x \psi'(x) }{\psi(x)} >A$.
\end{lemma}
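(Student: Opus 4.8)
The plan is to argue by contradiction: a uniform bound $x\psi'(x)/\psi(x)\le A$ on a right neighbourhood of $0$ forces $\psi$ to decay no faster than the power $x^{A}$, and this is incompatible with the infinite-type condition \eqref{infinite type}.

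Concretely, suppose the statement failed, so that there are $\varepsilon>0$ and $A>0$ with $x\psi'(x)\le A\psi(x)$ for all $x\in(0,\varepsilon)$. Shrinking $\varepsilon$ we may assume $\varepsilon<1$ and that $\psi$ is positive and $\mathcal{C}^{1}$ on $(0,\varepsilon)$; positivity is available because in the proof of Theorem \ref{Pascal} we have already reduced to $\psi^{-1}\{0\}=\{0\}$, and $\mathcal{C}^{1}$-regularity is part of the hypothesis on $\psi$. Then $t\mapsto\psi'(t)/\psi(t)$ is continuous on $(0,\varepsilon)$ and bounded there by $A/t$, so for $0<a<b<\varepsilon$ the fundamental theorem of calculus gives
$$
\log\psi(b)-\log\psi(a)=\int_{a}^{b}\frac{\psi'(t)}{\psi(t)}\,dt\le A\int_{a}^{b}\frac{dt}{t}=A\log\frac{b}{a}
$$
(equivalently, $x\mapsto\log\psi(x)-A\log x$ is non-increasing on $(0,\varepsilon)$). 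Fixing $b=\varepsilon/2$ and writing $c=\psi(\varepsilon/2)(\varepsilon/2)^{-A}>0$, this says $\psi(a)\ge c\,a^{A}$, hence $\log\psi(a)\ge\log c+A\log a$, for every $a\in(0,\varepsilon/2)$. Since $\log a<0$ when $a$ is small, dividing by $\log a$ reverses the inequality and yields $\frac{\log\psi(|a|)}{\log|a|}\le A+\frac{\log c}{\log a}$; letting $a\to 0^{+}$ we obtain $\limsup_{a\to 0}\frac{\log\psi(|a|)}{\log|a|}\le A<\infty$, contradicting \eqref{infinite type}.

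I expect no serious obstacle here: the argument is just a logarithmic integration of the hypothesis. The only points needing a little care are ensuring $\psi>0$ on a punctured neighbourhood of $0$ (so that $\log\psi$ is a legitimate $\mathcal{C}^{1}$ function, which is exactly the content of the reduction $\psi^{-1}\{0\}=\{0\}$), and the sign reversal when dividing by the negative quantity $\log a$.
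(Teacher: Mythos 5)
Your argument is correct and is essentially the same as the paper's: both proceed by contradiction, integrate the bound $\psi'/\psi\le A/x$ to obtain $\psi(x)\ge c\,x^{A}$ near $0$, and conclude that this contradicts the infinite-type condition \eqref{infinite type}. The extra care you take with the positivity of $\psi$ and the sign of $\log a$ only makes explicit what the paper leaves implicit.
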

\begin{proof}
Suppose instead that there exist $\varepsilon>0$ and $A>0$ such that
$ \frac{x \psi'(x) }{\psi(x)} \le A$ for $0<x\le \varepsilon$.  Then
$$
\frac{d}{dx} \left( \log \psi(x)\right) \le \frac{A}x, \quad 0<x\le \varepsilon,
$$
so $\log(\psi(\varepsilon)) - \log (\psi(x)) \le A \left(  \log \varepsilon - \log x \right)$,
i.e.
$$
 \psi(x) \ge \frac{\psi(\varepsilon)}{\varepsilon^{A}} x^{ A}, \quad 0<x\le \varepsilon,
$$
which means that at the point $0$ there is finite order of contact
with the tangent hyperplane, a contradiction.
\end{proof}

Assume the claims for a while,
 and observe that for any $x$ verifying the conclusion of Lemma \ref{psipsi} we have
$$
(r,p)_{q}-(r,s)_{q}, \quad (p,s)_{q}-(r,s)_{q}\geq -\frac{1}{2}\log\frac{\psi (x)}{x\psi^{\prime}(x)}+C_{1}.
$$
Since the above quantity can be made arbitrarily large, it finishes the proof.

It remains to prove (\ref{A})-(\ref{D}).

(\ref{A}) is clear. Next, since $(\psi((1-\alpha (x))x),(1-\alpha (x))x)\in D,\ d_{D}(s)\leq \psi (x)-\psi ((1-\alpha (x))x))\leq \alpha (x)x\psi^{\prime}(x)$ by convexity. Let $L$ be the real line through $(\psi((1-\alpha (x))x),(1-\alpha (x))x)$ and $(\psi (x),x).$ Its slope is less than $\psi^{\prime}(x),$ so $d_{D}(s)\geq \textup{dist}\,(s,L^{\prime}),$ where $L^{\prime}$ is the line through $(\psi((1-\alpha (x))x),(1-\alpha (x))x)$ with slope $\psi^{\prime}(x),$ so
\begin{multline*}
d_{D}(s) \geq \frac{\psi (x) - \psi ((1-\alpha(x))x)}{\sqrt{1+\psi^{\prime}(x)^{2}}} \\
\geq \frac{1}{2} {\alpha (x) \times \psi^{\prime}((1-\alpha (x))x)} \geq \frac{1}{4} \alpha (x) \times \psi^{\prime}(x).
\end{multline*}
Thus, $\frac{\alpha (x)}{4}x\psi^{\prime}(x)\leq d_{D}(s)\leq \alpha (x)x\psi^{\prime}(x).$ Analogous estimates hold for $r,$ which gives (\ref{B}).

The analytic disc $\zeta \mapsto (\psi (x),x\zeta)$ provides immediate upper estimates in (\ref{C}) and (\ref{D}).

To get lower estimate for $k_{D}(s,q),$ we map $D$ to a domain in $\mathbb{C}$ by the complex affine projection $\pi_{s}$ to $\{z_{1}=\psi (x)\},$ parallel to the complex tangent space to $\partial D$ at the point $(\psi(x),x).$ Then $\pi_{s}(D)=\{\psi (x)\}\times D_{s},$ where $D_{s}$ is a convex domain in $\mathbb{C},$ containing the disc $\{|z_{2}|<x\},$ and its tangent line at the point $x$ is the real line $\{\Re z_{2}=x\}.$ The projection is given by the explicit formula
$$\pi_{s}(z_{1},z_{2})=\Big{(}\psi (x),z_{2}+\frac{\psi (x)-z_{1}}{\psi^{\prime}(x)}\Big{)}.$$
We renormalize by setting $f_{+}(z)=1-\frac{1}{x}[\pi_{s}(z)]_{2}.$
Therefore $f_{+}(D) \subset H=\{z\in\mathbb{C}:\Re z>0\},$ so
\begin{equation}\label{E}
k_{D}(s,q)\geq
 k_{H}(f_{+}(s),f_{+}(q)) = k_H (\alpha(x),0)
 \geq -\frac{1}{2}\log \alpha (x) +C_{2},
\end{equation}
where $C_{2}>0$ does not depend on $x.$

The estimate for $k_{D}(r,q)$ proceeds along the same lines, but we use the projection $\pi_{r}$ to $\{z_{1}=\psi (x)\}$ along the complex tangent space to $\partial D$ at $(\psi (x),x),$ given by
$$\pi_{r}=\Big{(}\psi (x),z_{2}-\frac{\psi (x)-z_{1}}{\psi^{\prime}(x)}\Big{)}.$$
Note that choosing $f_{-}(z)=1+\frac{1}{x}[\pi_{r}(z)]_{2},$ we have $f_{-}(D)\subset\{\Re z>0\}.$

Now we tackle the lower estimates for $k_{D}(r,s).$ Let $\gamma$ be any piecewise $\mathcal{C}^{1}$ curve such that $\gamma (0)=s,\,\gamma (1)=r.$
Let $c_{0}<\frac{1}{2}.$
We claim that there exists $t_{0}\in (0,1)$ such that if we set $u=\gamma (t_{0}),$ then $|f_{+}(u)|,|f_{-}(u)|\geq c_{0}.$

For this write $\gamma=(\gamma_{1},\gamma_{2}).$ Set $\zeta_{1}=1-\frac{\psi (x)\,-\,\gamma_1(t_0)}{x\psi^{\prime}(x)}.$
By the explicit form of $\pi_{s},$ the condition $|f_{+}(u)|\geq c_{0}$ reads
$|\zeta_{1}-\frac{\gamma_{2}(t_0)}{x}|\geq c_{0},$
and the condition $|f_{-}(u)|\geq c_{0}$ reads $|\zeta_{1}+\frac{\gamma_{2}(t_0)}{x}|\geq c_{0}.$
We claim that the discs $\overline{\mathbb{D}}(\zeta_{1},c_{0})$
and $\overline{\mathbb{D}}(-\zeta_{1},c_{0})$ are disjoint for any $t.$
Indeed, they would intersect if and only if
$0\in\overline{\mathbb{D}}(\zeta_{1},c_{0}),$ which implies
$$
\Re\Big{(}\frac{\gamma_{1}(t_0)}{x \psi^{\prime}(x)}\Big{)}
\leq -1+c_{0}+\frac{\psi (x)}{x\psi^{\prime}(x)}\leq -\frac{1}{3}
$$
for any $x$ such that $\frac{\psi (x)}{x\psi^{\prime}(x)}\leq \frac{1}{6}$,
which we may assume
by Lemma \ref{psipsi}.
In particular $\Re\gamma_{1}(t_0)<0,$ which is excluded for any $\gamma (t)\in D.$
Now let $t_{1}=\max\{t:\frac{\gamma_{2}(t)}{x}\in\overline{\mathbb{D}}(\zeta_{1},c_{0})\}.$
Then $\frac{\gamma_{2}(t_{1})}{x}\notin\overline{\mathbb{D}}(-\zeta_{1},c_{0}),$
and by continuity there is $\eta >0$ such that $\frac{\gamma_{2}(t_{1}+\eta)}{x}\notin\overline{\mathbb{D}}(-\zeta_{1},c_{0}),$ and of course $\frac{\gamma_{2}(t_{1}+\eta)}{x}\notin\overline{\mathbb{D}}(\zeta_{1},c_{0})$ by maximality of $t_{1},$ so $t_{0}=t_{1}+\eta$ will
provide a point satisfying the claim.

Consequently,
taking a curve $\gamma$ such that
$$k_D(r,s)+1 >  \int_{0}^{1}\kappa_{D}(\gamma (t);\gamma^{\prime}(t))dt,$$
\begin{multline*}
\int_{0}^{1}\kappa_{D}(\gamma (t);\gamma^{\prime}(t))dt
\geq\int_{0}^{t_{0}}\kappa_{D}(\gamma (t);\gamma^{\prime}(t))dt
+\int_{t_{0}}^{1}\kappa_{D}(\gamma (t);\gamma^{\prime}(t))dt  \\
\geq k_{D}(r,u)+k_{D}(u,s).
\end{multline*}
We end the proof by estimating $k_{D}(r,u)$ in the same way as we did $k_D(r,q)$
above, and $k_{D}(u,s)$ as as we did $k_D(s,q)$
above, using the maps $f_+,f_-$ and estimates about the Kobayashi distance 
in a half plane.
\medskip

\noindent{\it Proof of Proposition \ref{ball}.} Set $G'=G\setminus\overline{D}.$

Assume first that $G'$ is not a domain. Let $G''$ be a bounded connected component
of $G'.$ Consider a farthest point $a\in\partial G''$ from the origin. Then $a$ is a concave boundary point
of $D$ which a contradiction.

Choose now a smooth domain $E$ such that $D\Subset E\Subset G.$ By smoothness and compactness, there is a constant $C>0$
such that any two points in $G'\cap\overline{E}$ may be jointed by a path in $G'\cap\overline{E}$ of length (at most) $C.$
By Propositions A3 (after integration) and A4, we may find a constant
$c>0$ such that
$$k_{G'}(z,w)\le c||z-w||^{1/4} \le c^2,\ z,w\in G'\cap\overline{E},$$
$$k_{G'}\le c k_G,\ z,w\in G\setminus E.$$
So there are constants $c_1, c_2$ such that
$$k_G\le k_{G'}\le c_1 k_G + c_2,\ z,w\in G\setminus E\mbox{ or }z,w\in G'\cap\overline{E}.$$

Finally, let $z\in G\setminus E$ and $w\in G'\cap\overline{E}.$ Since the Kobayashi distance is intrinsic
and $\partial E$ is  compact, we may find a point $u\in\partial E$ such that
$$k_G(z,w)=k_G(z,u)+k_G(u,w).$$
It follows that
$$k_G(z,w)\ge ck_{G'}(z,u)+ck_{G'}(u,w)\ge ck_{G'}(z,w).$$

\noindent{\it Proof of Proposition \ref{compact}.} It clear that $G'=G\setminus K$ is a domain. Following the previous proof,
let $E$ be a domain such that $K\subset E\Subset G.$ All the arguments in the previous proof work except the estimate on
$G'\cap\overline{E}.$
We need to find a constant $c>0$ such that
$$k_{G'}(z,w)\le c,\ z,w\in G'\cap\overline{E}.$$

Take a complex line $L$ through $z$ which is disjoint
from $K.$ Then the disc in $L$ with center of $z$ and radius $d_{G\cap L}(z)$ lies in $G'.$
Choose a common point $z'$ of this disc and $\partial E$ such that $||z-z'||=d_{E\cap L}(z).$ Then
$$\tanh l_{G'}(z,z')\le\frac{||z-z'||}{d_{G\cap L}(z)}\le
 1-\frac{r}{d_{G\cap L}(z)}\le 1-\frac{2r}{s},$$
where $r=\mbox{dist}(E,\partial G)$ and $s=\mbox{diam }G.$

Choosing $w'$ for $w$ in the same way, it follows that
$$k_{G'}(z,w)\le k_{G'}(z,z')+k_{G'}(z',w')+k_{G'}(w',w)\le 2c'+c''$$
where $c'=\tanh^{-1}(1-2r/s)$ and $c''=\max k_{G'}|_{\partial E\times\partial E}.$

\section{Appendix}

\begin{proposition} \textup{(cf. \cite[Proposition 2]{NT})}\hfill

(a) Let $D$ be proper convex domain in $\Bbb C^n.$ Then
$$c_{D}(z,w)\geq\frac{1}{2}\left|\log\frac{d_{D}(z)}{d_{D}(w)}\right|,\quad z,w\in D.$$

(b) Let $D$ be proper $\Bbb C$-convex domain in $\Bbb C^n.$ Then
$$c_{D}(z,w)\geq\frac{1}{4}\left|\log\frac{d_{D}(z)}{d_{D}(w)}\right|,\quad z,w\in D.$$
\end{proposition}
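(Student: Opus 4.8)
The plan is to prove the lower estimate $c_D(z,w)\ge\frac14\bigl|\log\frac{d_D(z)}{d_D(w)}\bigr|$ for a proper $\mathbb C$-convex domain by reducing it, via a suitable holomorphic map into the unit disc, to the one-dimensional comparison of boundary distances. Fix $z,w\in D$ and assume without loss of generality (by symmetry of the inequality) that $d_D(w)\le d_D(z)$. The key structural fact about $\mathbb C$-convex domains is \emph{linear convexity}: through every point $p\notin D$ there passes a complex hyperplane disjoint from $D$ (cf. \cite[Theorem 2.3.9]{APS}, already invoked in the paper). I would apply this to a boundary point $p\in\partial D$ realizing $d_D(w)=\lVert w-p\rVert$, obtaining a complex affine hyperplane $\Pi=\{z:\langle z-p,v\rangle=0\}$ (for a suitable unit vector $v$) with $\Pi\cap D=\varnothing$.

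Next I would use the Min-type argument: the linear functional $\ell(z)=\langle z-p,v\rangle$ is nonvanishing on $D$, so after composing with a branch of logarithm one gets that $D$ is mapped into a half-plane, hence into a disc. More precisely, the crucial quantitative point is that for any $\zeta\in D$ the quantity $|\ell(\zeta)|$ is comparable to $d_D(\zeta)$ \emph{up to the dimensional constant coming from $\mathbb C$-convexity}: one always has $|\ell(\zeta)|\ge d_D(\zeta)$ trivially (the hyperplane $\ell=0$ is outside $D$), while the reverse comparison with constant controlled by the geometry is exactly the content of the known estimate for $\mathbb C$-convex domains — this is where the factor $\tfrac14$ (versus $\tfrac12$ in the convex case of part (a)) enters. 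I would then consider the holomorphic map $F=\ell/\ell(z):D\to\mathbb C$, which omits the value $0$; composing with an automorphism of a half-plane and then a Cayley transform sends $D$ into $\mathbb D$ with $z\mapsto 0$, and the Schwarz–Pick lemma gives $c_D(z,w)\ge c_{\mathbb D}(0,F(w))=\tanh^{-1}|\cdot|\ge\tfrac12\log\frac{1}{1-|F(w)|}$ type bounds, which after unwinding in terms of $|\ell(z)|,|\ell(w)|$ yields $c_D(z,w)\ge\tfrac14\log\frac{d_D(z)}{d_D(w)}$.

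Concretely the cleanest route: since $\ell$ omits $0$ on $D$ and $D$ is $\mathbb C$-convex, a theorem of Nikolov–Pflug–Zwonek (the same circle of ideas as \cite[Theorem 12]{NPZ}, \cite[(4)]{NPZ}) gives $\frac14 d_D(\zeta)\le |\ell(\zeta)|$ in the relevant normalization, or one argues directly: the disc $\mathbb D(w,d_D(w))\subset D$ on which $\ell$ is nonvanishing forces, by Schwarz's lemma applied to $d_D(w)/\ell(w+ d_D(w)\cdot)$, control on $\ell'$ and hence the comparison; meanwhile the half-plane $\{\Re\langle\cdot-p,v\rangle>0\}$ containing $D$ gives $|\ell(z)|\le$ (diameter-type bound). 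Feeding the pair of inequalities $d_D(w)\le|\ell(w)|$ and $|\ell(z)|\le 4\,d_D(z)$ (or the symmetric normalization) into $c_D(z,w)\ge c_{\mathbb D}\bigl(0,\tfrac{\ell(z)-\ell(w)}{\ell(z)}\bigr)\ge \tfrac12\log\frac{|\ell(z)|}{|\ell(w)|}$ produces the claim.

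The main obstacle I expect is pinning down the sharp dimensional constant $\tfrac14$ rather than some larger constant: one must choose the omitted-hyperplane normalization and the one-variable reduction so that exactly one factor of $\tfrac12$ is lost relative to the genuinely convex case (part (a)), where the supporting object is a real half-space and no loss occurs. This is a matter of carefully tracking how a $\mathbb C$-convex domain sits between a complex hyperplane complement and a genuine convex set; the cleanest statement is presumably already isolated as \cite[Proposition 2]{NT} or its proof, so in the write-up I would cite that and only sketch the half-plane/Schwarz–Pick reduction, noting that (b) follows from (a) after replacing "real supporting hyperplane" by "complex supporting hyperplane", which halves the available slope and hence the constant.
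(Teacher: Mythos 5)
First, a point of comparison: the paper itself gives no proof of this proposition --- it is quoted from \cite[Proposition 2]{NT} --- so your attempt can only be measured against the standard argument behind that reference. Your skeleton for (b) starts correctly: assuming $d_D(w)\le d_D(z)$, take a boundary point $p$ with $\lVert w-p\rVert=d_D(w)$, use linear convexity to get a complex hyperplane $\Pi\ni p$ with $\Pi\cap D=\varnothing$, and set $\ell(\zeta)=\langle\zeta-p,v\rangle$ with $v$ a unit normal to $\Pi$. The only comparisons between $|\ell|$ and $d_D$ that are then needed are $|\ell(w)|\le\lVert w-p\rVert=d_D(w)$ and $|\ell(\zeta)|=\operatorname{dist}(\zeta,\Pi)\ge d_D(\zeta)$ for all $\zeta\in D$ (since $\Pi$ lies in the complement of $D$); both are free of constants, and note that your displayed pair ``$d_D(w)\le|\ell(w)|$, $|\ell(z)|\le 4d_D(z)$'' points in the wrong directions for the final estimate.

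The genuine gap is in the one-variable step, which is where the constant $\tfrac14$ actually lives. Two of your claims there fail: composing $\ell$ with a branch of $\log$ does \emph{not} send $D$ into a half-plane (the planar image $\ell(D)$ merely omits $0$ and need not lie in any half-plane --- think of $\mathbb{C}\setminus(-\infty,0]$ --- so the map $1-\ell(w)/\ell(z)$ has no reason to take values in $\mathbb{D}$ and the Schwarz--Pick step collapses); and a two-sided comparison $|\ell(\zeta)|\le C\,d_D(\zeta)$ is false for points far from $p$, nor is it the source of $\tfrac14$. The missing ingredient is that $\ell(D)$, being an affine image of a $\mathbb{C}$-convex domain, is itself $\mathbb{C}$-convex, hence a simply connected proper subdomain $G$ of $\mathbb{C}$ with $0\notin G$. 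For such $G$ the Carath\'eodory distance equals the Poincar\'e distance, and the Koebe one-quarter theorem gives $\kappa_G(\zeta;1)\ge\frac{1}{4d_G(\zeta)}\ge\frac{1}{4|\zeta|}$; integrating along curves yields
$$c_G(a,b)\ \ge\ \frac14\left|\log\frac{|a|}{|b|}\right|,$$
and combining this with $|\ell(z)|\ge d_D(z)\ge d_D(w)\ge|\ell(w)|$ finishes (b). Your part (a) is fine as a sketch: the real supporting hyperplane at $p$ puts $D$ inside a genuine half-space $H$ with $d_H(w)=d_D(w)$ and $d_H(z)\ge d_D(z)$, and the half-plane estimate loses nothing, which is exactly why the constant there is $\tfrac12$.
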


\begin{proposition} \textup{(see the proof of \cite[Proposition 10.2.3]{Jarnicki})} Let $b$ be a $\mathcal{C}^{1,1}$-smooth boundary point
of a domain $D$ is $\Bbb C^n$ and let $K\Subset D.$ Then there exist a neighborhood $U$ of $b$ and a constant $C>0$ such that
$$2k_D(z,w)\le-\log d_D(z)+C,\quad z\in D\cap U,\ w\in K.$$
\end{proposition}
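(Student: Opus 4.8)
The plan is to localise by means of the uniform interior ball condition enjoyed by a $\mathcal{C}^{1,1}$ boundary point, and then to dominate $k_D$ by the Kobayashi distance of that ball, which is known explicitly.

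First I would recall the geometric input: $\mathcal{C}^{1,1}$-smoothness at $b$ (i.e.\ a local defining function with Lipschitz gradient) gives constants $\rho>0$ and a neighbourhood $V$ of $b$ such that for every $p\in\partial D\cap V$ the ball $B(p-\rho\nu_p,\rho)$ lies in $D$, where $\nu_p$ is the outer unit normal at $p$; this is precisely the fact used in \cite[Proposition 10.2.3]{Jarnicki}. Since $\nu$ is continuous on $\partial D\cap V$ and $B(b-\rho\nu_b,\rho)\subset D$ forces $d_D(b-\rho\nu_b)\ge\rho$, I can fix a neighbourhood $U\subset V$ of $b$ so small that for every $z\in D\cap U$: (i) $d_D(z)<\rho$; (ii) some nearest boundary point $\pi(z)$ lies in $\partial D\cap V$; and (iii) the corresponding centre $z_0:=\pi(z)-\rho\nu_{\pi(z)}$ lies in a fixed compact set $K'\Subset D$ (as $z\to b$ one has $\pi(z)\to b$, hence $z_0\to b-\rho\nu_b$).

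Now fix $z\in D\cap U$ and let $B=B(z_0,\rho)\subset D$ with $z_0$ as above. Because $z=\pi(z)-d_D(z)\nu_{\pi(z)}$ sits on the radius of $B$ issuing from $\pi(z)$, we have $|z-z_0|=\rho-d_D(z)$. Holomorphic contractivity of the Kobayashi distance under the inclusion $B\hookrightarrow D$ together with the explicit formula for $k_B$ gives
$$
k_D(z,z_0)\le k_B(z,z_0)=\tanh^{-1}\frac{\rho-d_D(z)}{\rho}
=\frac12\log\frac{2\rho-d_D(z)}{d_D(z)}\le -\frac12\log d_D(z)+\frac12\log(2\rho).
$$
Finally, the triangle inequality yields, for $z\in D\cap U$ and $w\in K$,
$$
2k_D(z,w)\le 2k_D(z,z_0)+2k_D(z_0,w)\le -\log d_D(z)+\log(2\rho)+2\max_{(u,v)\in K'\times K}k_D(u,v),
$$
and since $k_D$ is finite and continuous on the compact set $K'\times K$ (recall $D$ is a domain), the last two summands combine into a constant $C=C(\rho,K,K')$ independent of $z$ and $w$. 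This is the claimed estimate.

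The bulk of the work — and the only genuinely delicate step — is the uniform interior ball condition and the verification that $U$ can be chosen so that the centres $z_0$ stay in a fixed compact subset of $D$; once that is in place the remainder is the one-line ball computation above plus a triangle inequality. I would also note that only an interior ball is used, so no two-sided control on $d_D(z)$ is needed, and that the argument goes through verbatim when $\partial D$ is assumed to be $\mathcal{C}^{1,1}$ only in a neighbourhood of $b$, as in the statement.
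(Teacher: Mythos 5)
Your proof is correct. The paper supplies no argument of its own for this proposition --- it simply refers to the proof of Proposition 10.2.3 in \cite{Jarnicki} --- and that proof rests on exactly the interior tangent ball construction you use (uniform inner ball at a $\mathcal{C}^{1,1}$ point, the explicit formula $k_B(z_0,z)=\tanh^{-1}(|z-z_0|/\rho)$, and a triangle inequality back to the compact set $K$), so your approach is essentially the intended one.
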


\begin{proposition} \textup{(cf. \cite[Theorem 1]{DNT})} Let $b$ is a $\mathcal{C}^2$-smooth non-pseu\-doconvex boundary point of
a domain $D$ in $\Bbb C^2.$ Then there exist a neighborhood $U$ of $b$ and a constant $c>0$ such that
$$
c\kappa_D(z;X)\le\frac{|\langle\nabla d_D(z),X\rangle|}{(d_D(z))^{3/4}}+|X|,\quad z\in D\cap U,\ X\in\Bbb C^n.
$$
\end{proposition}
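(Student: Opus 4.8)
\noindent\textit{Proof proposal.} Since $\kappa_{D}(z;X)\le|X|/d_{D}(z)$ (use the affine disc $\lambda\mapsto z+\lambda\, d_{D}(z)\,X/|X|$, which maps $\mathbb{D}$ into the ball $B(z,d_{D}(z))\subset D$), and the right‐hand side of the asserted inequality is $\ge|X|$, the estimate is automatic whenever $d_{D}(z)$ stays bounded below. Hence the matter is local near $b$ and one may take $d_{D}(z)$ as small as needed; throughout, $\langle\nabla d_{D}(z),X\rangle=\sum_{j}\partial_{z_{j}}d_{D}(z)\,X_{j}$. The plan is to build explicit quadratic analytic discs through $z$ whose derivative is large in the appropriate direction, the improvement over the trivial bound coming entirely from the negative Levi eigenvalue at $b$.

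\smallskip\textbf{Step 1 (reduction to a model).} Let $w\in\partial D$ satisfy $\lVert z-w\rVert=d_{D}(z)$. Non‐pseudoconvexity is an open condition on $\partial D$, so $w$ is again a $\mathcal C^{2}$‐smooth non‐pseudoconvex point, and in suitable unitary coordinates centred at $w$ a defining function of $D$ reads
$$\rho(\zeta)=-\Re\zeta_{1}-\mu|\zeta_{2}|^{2}+\Re(a\zeta_{1}^{2})+\nu|\zeta_{1}|^{2}+2\Re(e\zeta_{1}\bar\zeta_{2})+o(|\zeta|^{2}),$$
with $\mu=\mu(w)>0$ the (single, negative) Levi eigenvalue; by continuity $\mu(w)\ge\mu_{0}>0$ and $|a|,|\nu|,|e|$ remain bounded as $w$ ranges over a neighbourhood of $b$. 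Absorbing the pluriharmonic term and bounding $2|\Re(e\zeta_{1}\bar\zeta_{2})|$ together with the $o$‐term by $\tfrac\mu2|\zeta_{2}|^{2}+C_{1}|\zeta_{1}|^{2}$ on a small ball, one gets, with $\varepsilon,C_{1}$ independent of $w$,
$$\Omega:=\{\zeta:|\zeta|<\varepsilon,\ \Re\zeta_{1}+\tfrac\mu4|\zeta_{2}|^{2}>C_{1}|\zeta_{1}|^{2}\}\subset D,$$
where $z=(d,0)$ with $d:=d_{D}(z)$, $\nabla d_{D}(z)$ is the real $\zeta_{1}$‐direction (so $\langle\nabla d_{D}(z),X\rangle=\tfrac12X_{1}$), and the defining quantity of $\Omega$ at $z$ equals $d-C_{1}d^{2}\asymp d$. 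As $\kappa_{D}\le\kappa_{\Omega}$, it remains to prove $\kappa_{\Omega}((d,0);X)\lesssim|X_{1}|\,d^{-3/4}+|X_{2}|$ for unit $X$, with constants uniform in $w$.

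\smallskip\textbf{Step 2 (quadratic discs; the exponent $3/4$).} The heart is the normal direction $X=(1,0)$. Use $\varphi(\lambda)=(d-\tau\lambda,\ t\lambda^{2})$ with $t\asymp\varepsilon$ fixed: then $\varphi(0)=(d,0)$, $\varphi'(0)=(-\tau,0)$, and $\varphi(\mathbb{D})\subset\Omega$ provided $d-\tau s+\tfrac\mu4 t^{2}s^{4}>0$ for all $s\in[0,1]$, the term $C_{1}|d-\tau\lambda|^{2}$ being $O(d^{3/2})=o(d)$ along the disc (one will have $\tau\lesssim d^{3/4}$, hence $|\zeta_{1}|\lesssim d^{3/4}$) and so negligible. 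Minimising $g(s)=\tfrac\mu4 t^{2}s^{4}-\tau s+d$ over $[0,1]$ at $s_{*}=(\tau/\mu t^{2})^{1/3}$ gives $g(s_{*})=d-\tfrac34\tau s_{*}$, which is positive precisely when $\tau\lesssim d^{3/4}t^{1/2}\asymp d^{3/4}$; hence $\kappa_{\Omega}((d,0);(1,0))\le1/\tau\lesssim d^{-3/4}$. The exponent $3/4$ is exactly this balance of the degree‐one term $\tau s$ (normal excursion) against the degree‐four term $\tfrac\mu4 t^{2}s^{4}$, which is the negative Levi form $-\mu|\zeta_{2}|^{2}$ evaluated on the quadratic tangential excursion $t\lambda^{2}$ — nothing beyond $\mu>0$ is used. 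For $X=(0,1)$ the disc $\lambda\mapsto(d,R\lambda)$ lies in $\Omega$ as long as $R<\varepsilon$, so $\kappa_{\Omega}((d,0);(0,1))\lesssim1$. A general $X$ is handled with $\varphi(\lambda)=(d+\tau X_{1}\lambda+b_{1}\lambda^{2},\ \tau X_{2}\lambda+b_{2}\lambda^{2})$: if $|X_{1}|\le\tfrac12\sqrt{\mu d}\,|X_{2}|$ the affine choice $b_{1}=b_{2}=0$ works for all $\tau\asymp\varepsilon/|X|$, since the quadratic $d-|\tau X_{1}|s+\tfrac\mu4|\tau X_{2}|^{2}s^{2}$ has negative discriminant, giving $\kappa_{\Omega}\lesssim|X|\asymp|X_{2}|$; if $|X_{1}|>\tfrac12\sqrt{\mu d}\,|X_{2}|$ (so $|X|\asymp|X_{1}|$), one keeps $b_{1}=0$, takes $|b_{2}|\asymp\varepsilon$, and reruns the degree‐four balance — the $\zeta_{2}$‐term $|\lambda|^{2}|\tau X_{2}+b_{2}\lambda|^{2}$ dominates $\gtrsim\varepsilon^{2}s^{4}$ once $|b_{2}|s\gtrsim\tau|X_{2}|$, while for smaller $s$ the linear term is harmless — yielding $\tau\asymp d^{3/4}/|X_{1}|$, i.e.\ $\kappa_{\Omega}\lesssim|X_{1}|\,d^{-3/4}$. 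Assembling the cases and undoing Step 1 gives $c\,\kappa_{D}(z;X)\le|\langle\nabla d_{D}(z),X\rangle|\,d_{D}(z)^{-3/4}+|X|$ with $c=c(D,b)>0$.

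\smallskip\textbf{Expected main obstacle.} The delicate part is the general‐direction estimate when $X_{2}\ne0$: one must choose $b_{1},b_{2}$ and fix the precise threshold separating the two regimes so that the linear $\zeta_{2}$‐excursion, the cross term in $|\tau X_{2}\lambda+b_{2}\lambda^{2}|^{2}$ and the auxiliary term $C_{1}|\,\cdot\,|^{2}$ are simultaneously dominated, and to do so with constants uniform in the base point $w$ — which is exactly where the continuity of the normal‐form coefficients and the uniform bound $\mu(w)\ge\mu_{0}>0$ enter. Everything else is the one‐variable calculus of $g(s)=\tfrac\mu4 t^{2}s^{4}-\tau s+d$ (the source of the exponent $3/4$) together with the trivial localisation $\kappa_{D}\le\kappa_{\Omega}$.
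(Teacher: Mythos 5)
Your strategy — localize at the nearest boundary point, pass to a model region $\Omega\subset D$ carved out by the sign of the Levi form, and then beat the trivial bound $\kappa_D(z;X)\le |X|/d_D(z)$ in the normal direction with a \emph{quadratic} disc $\lambda\mapsto(d-\tau\lambda,\,t\lambda^{2})$, the exponent $3/4$ coming from the balance of $\tau s$ against $\tfrac{\mu}{4}t^{2}s^{4}$ — is exactly the right one, and it is essentially the mechanism behind the cited result. Note that the paper itself offers no proof of this proposition: it is quoted verbatim from \cite[Theorem 1]{DNT}, so there is no internal argument to compare against; your reconstruction is the natural one.

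There is, however, a genuine gap in Step 1. In unitary coordinates the second-order Taylor expansion of a defining function at $w$ contains the full pluriharmonic part $\Re\bigl(a_{11}\zeta_{1}^{2}+a_{12}\zeta_{1}\zeta_{2}+a_{22}\zeta_{2}^{2}\bigr)$, and you have silently dropped the terms in $\zeta_{1}\zeta_{2}$ and $\zeta_{2}^{2}$. The term $\Re(a_{22}\zeta_{2}^{2})$ cannot be absorbed into $C_{1}|\zeta_{1}|^{2}$, nor into $\tfrac{\mu}{2}|\zeta_{2}|^{2}$, because it changes sign with $\arg\zeta_{2}$; if $|a_{22}|>\mu$ the inclusion $\Omega\subset D$ is simply false, and the disc $\lambda\mapsto(d-\tau\lambda,t\lambda^{2})$ leaves $D$ along the rays where $\Re(a_{22}t^{2}\lambda^{4})>0$ (take $\rho=-\Re\zeta_{1}-2|\zeta_{2}|^{2}+3\Re(\zeta_{2}^{2})+\dots$: the point is still non-pseudoconvex, but $\partial D$ is convex in the direction $\zeta_{2}\in\mathbb{R}$). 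The standard repair is to first apply the holomorphic change of variables $\zeta_{1}\mapsto\zeta_{1}-\sum a_{jk}\zeta_{j}\zeta_{k}$, which kills all pluriharmonic quadratic terms and leaves the Levi form unchanged — but then the coordinates are no longer unitary, and one must check (easily, since the map is tangent to the identity at $w$) that $d_{D}$ and $\nabla d_{D}$ are distorted only by bounded factors. Two smaller points in Step 2: in Case 2 the assertion $|X|\asymp|X_{1}|$ is wrong (one only knows $|X_{2}|\le 2|X_{1}|/\sqrt{\mu d}$), though harmless since $|X_{1}|d^{-3/4}\ge \tfrac{\sqrt{\mu}}{2}|X_{2}|d^{-1/4}$ still dominates $|X_{2}|$; and the claim that ``for smaller $s$ the linear term is harmless'' only holds after you (i) take the implied constant in $\tau\asymp d^{3/4}/|X_{1}|$ small compared with $\mu_{0}^{1/4}\varepsilon^{1/2}$, so that $\tau^{2}|X_{1}||X_{2}|/|b_{2}|\le d/4$, and (ii) either run the two-regime argument in $s$ carefully or choose $\arg b_{2}$ so that $b_{2}\lambda^{2}$ reinforces rather than cancels $\tau X_{2}\lambda$ at the angle where $\Re(\tau X_{1}\lambda)$ is most negative. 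With those repairs the argument closes.
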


\begin{proposition} Let $D$ be a bounded domain in $\Bbb C^n.$ Let $U$ and $V$ be neighborhoods of $\partial D$ with
$V\Subset U.$ Then there exists a constant $c>0$ such that for any connected component $D'$ of $D\cap U$ one has that
$$ck_{D'}(z,w)\le k_D(z,w),\quad z,w\in D'\cap V.$$
\end{proposition}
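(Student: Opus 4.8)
The plan is to prove the two inequalities between $k_{D'}$ and $k_D$ on $(D'\cap V)\times(D'\cap V)$ separately. The estimate $k_D\le k_{D'}$ there is immediate, since $D'\subset D$ forces $l_{D'}\ge l_D$ and hence $k_{D'}\ge k_D$ on $D'\times D'$; so the whole content is to produce $C>0$, depending only on $D,U,V$, with $k_{D'}(z,w)\le C\,k_D(z,w)$ for $z,w\in D'\cap V$, after which $c=1/C$ works.

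First I would fatten $V$ slightly inside $U$: since $D$ is bounded and $V\Subset U$, set $\Omega=\{x:\operatorname{dist}(x,\overline V)<\tau\}$ with $2\tau=\operatorname{dist}(\overline V,\mathbb C^n\setminus U)>0$, so that $\overline V\subset\Omega$, $\overline\Omega\subset U$, and (because $\partial D\subset V\subset\Omega$) the set $Q:=D\setminus\Omega=\overline D\setminus\Omega$ is a \emph{compact} subset of $D$ with $\operatorname{dist}(Q,\overline V)\ge\tau$. Write $R=\operatorname{diam}D$, $\epsilon_0=\operatorname{dist}(\overline\Omega,\mathbb C^n\setminus U)>0$, and $c_0=\min\{1,\epsilon_0/R\}$. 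The key local fact comes from the Schwarz lemma: if $z\in\overline\Omega\cap D$ and $\varphi\in\mathcal O(\mathbb D,D)$ with $\varphi(0)=z$, applying Schwarz to $\lambda\mapsto\langle\varphi(\lambda)-z,\xi\rangle/R$ over unit vectors $\xi$ gives $\|\varphi(\lambda)-z\|\le R|\lambda|$, hence $\varphi(c_0\mathbb D)\subset B(z,\epsilon_0)\subset U$, so $\varphi(c_0\mathbb D)\subset D\cap U$; being connected and, when $z\in D'$, meeting $D'$, it lies in $D'$. Rescaling discs then gives the infinitesimal estimate $\kappa_{D'}(z;X)\le c_0^{-1}\kappa_D(z;X)$ for all $z\in D'\cap\overline\Omega$ and $X\in\mathbb C^n$.

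The main step is to integrate this along a near-geodesic of $k_D$ while controlling its excursions out of $\Omega$. Fix $z\ne w$ in $D'\cap V$ and, using that $k_D$ is intrinsic, a piecewise-$\mathcal C^1$ curve $\gamma\colon[0,1]\to D$ from $z$ to $w$ with $L_{\kappa_D}(\gamma)<2k_D(z,w)$. If $\gamma([0,1])\subset\Omega$, then $\gamma([0,1])\subset D'$ and the infinitesimal estimate integrates to $k_{D'}(z,w)\le c_0^{-1}L_{\kappa_D}(\gamma)<2c_0^{-1}k_D(z,w)$. Otherwise $\gamma$ meets $Q$; comparing $k_D$ with the Carath\'eodory ($=$ Kobayashi) distance of a ball of radius $R$ containing $D$ gives $k_D(z,q)\ge\tanh^{-1}(\operatorname{dist}(z,q)/2R)\ge\tanh^{-1}(\tau/2R)=:\delta>0$ for every $q\in Q$, so $\delta\le L_{\kappa_D}(\gamma)<2k_D(z,w)$, i.e.\ $k_D(z,w)>\delta/2$. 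Let $t_1,t_2$ be the first and last times $\gamma$ hits $Q$; then $\gamma(t_1),\gamma(t_2)\in\partial\Omega\cap D$, the pieces $\gamma|_{[0,t_1]}$ and $\gamma|_{[t_2,1]}$ lie in $D'\cap\overline\Omega$, and $\partial\Omega\cap D$ is a compact subset of $D\cap U$, so only finitely many components of $D\cap U$ meet it; each such component $D''$ is a bounded domain (so $k_{D''}$ is a finite continuous distance), whence $C_3:=\max_{D''}\operatorname{diam}_{k_{D''}}(\partial\Omega\cap D'')<\infty$, and $D'$ is one of these $D''$. Applying the infinitesimal estimate on the two end pieces and the triangle inequality, $k_{D'}(z,w)\le c_0^{-1}L_{\kappa_D}(\gamma)+C_3<2c_0^{-1}k_D(z,w)+C_3\le(2c_0^{-1}+2C_3/\delta)\,k_D(z,w)$. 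So $C:=2c_0^{-1}+2C_3/\delta$ works in all cases, and it depends only on $D,U,V$.

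The two points I expect to be delicate are exactly these: obtaining a purely multiplicative bound (no additive constant), which is why one needs the remark that any excursion of a near-geodesic out of $\Omega$ has $k_D$-length bounded below, via $k_D\ge c_D\ge c_{\text{ball}}$; and uniformity of $C$ over all components $D'$ of $D\cap U$, which is why one needs the observation that the compact set $\partial\Omega\cap D$ meets only finitely many components, keeping $C_3$ finite. Everything else is elementary (Schwarz lemma, monotonicity of $\kappa$ and $k$ under inclusions, and the integrated form of the Kobayashi distance).
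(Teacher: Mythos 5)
Your proof is correct, and it follows the same overall architecture as the paper's: localize the Kobayashi metric on a neighborhood of $\partial D$, take a near-optimal curve for $k_D(z,w)$, split it at its first and last exit from that neighborhood, and treat the interior excursion separately. The differences lie in two steps. For the localization you give a self-contained Schwarz-lemma argument yielding $\kappa_{D'}(z;X)\le c_0^{-1}\kappa_D(z;X)$ on $D'\cap\overline\Omega$, where the paper simply invokes the localization property of the Kobayashi metric (\cite[Proposition 7.2.9]{Jarnicki}); your version is more elementary but delivers the same infinitesimal inequality. For the middle piece, the paper compares $k_{D'}(z',w')$ with $k_D(z',w')$ through the Euclidean distance on the compact set $D'\cap\partial V$ (an upper Lipschitz bound for $k_{D'}$ and a lower Lipschitz bound for $k_D$ there) and then rearranges the resulting inequality; you instead bound $k_{D'}(\gamma(t_1),\gamma(t_2))$ by a uniform constant $C_3$ and convert this additive constant into a multiplicative one via the lower bound $k_D(z,w)>\delta/2$, which holds precisely in the case where the curve actually leaves $\Omega$. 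Your route has the small advantage of making the uniformity of the final constant over all components $D'$ completely explicit (only the finitely many components meeting the compact set $\partial\Omega\cap D$ contribute to $C_3$, and the constant $2c_0^{-1}$ in the other case is component-independent), a point that the paper's choice of the Lipschitz constant $c_1$ for $k_{D'}$ leaves implicit. Both arguments are sound.
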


\noindent{\it Proof.} Let $\varepsilon>0.$ Take a smooth curve $\gamma:[0,1]\to D$ such that $\gamma(0)=z,$ $\gamma(1)=w$ and
$$k_D(z,w,\varepsilon):=k_D(z,w)+\varepsilon>\int_0^1\kappa_{D}(\gamma (t);\gamma^{\prime}(t))dt.$$
Let $s=\sup\{t\in(0,1):\gamma(0,t)\subset D'\cap V\}$ and $r=\inf\{t\ge s:\gamma([t,1])\subset D'\cap V\}.$
Set $z'=\gamma(s)$ and $w'=\gamma(r).$ The localization property of the Kobayashi metric (cf. \cite[Proposition 7.2.9]
{Jarnicki}) provides a constant $c'>0$ such that
$$c'\kappa_{D'}(u;X)\le\kappa_D(u;X),\quad z\in {D'}\cap V,\ X\in\Bbb C^n.$$
It follows that
\begin{multline*}
k_D(z,w,\varepsilon)>c'k_{D'}(z,z')+k_D(z',w')+c'k_{D'}(w',w)\\
\ge c'k_{D'}(z,w)+k_D(z',w')-c'k_{D'}(z',w').
\end{multline*}

If $z'\neq w',$ then $z',w'\in {D'}\cap\partial V\Subset {D'}.$
Then there exists a constant $c_1>0$ such that
$$k_{D'}(u,v)\le c_1||u-v||,\quad u,v\in {D'}\cap\partial V.$$ On the other hand, since $D$ is bounded,
we may find a constant $c_2>0$ such that
$$k_D(u,v)\ge c_2||u-v||,\quad u,v\in {D'}\cap\partial V.$$

Then $$k_D(z,w,\varepsilon)>c'k_{D'}(z,w)+(c_2-c'c_1)||z'-w'||.$$
Since $$k_D(z,w,\varepsilon)>k_D(z',w')\ge c_2||z'-w'||,$$ we
get that
$$k_D(z,w,\varepsilon)>c'k_{D'}(z,w)-(c'c_1/c_2-1)^+k_D(z,w,\varepsilon).$$

The last inequality also holds if $z'=w'.$ Letting $\varepsilon\to 0,$ we obtain that
$$k_D(z,w)\ge\min\{c',c_2/c_1\}k_{D'}(z,w).$$

\end{document}